\numberwithin{equation}{section}
\theoremstyle{plain}
\newtheorem{theorem}{Theorem}[section]
\newtheorem{lemma}[theorem]{Lemma}
\newtheorem{corollary}[theorem]{Corollary}
\newtheorem{proposition}[theorem]{Proposition}
\theoremstyle{definition}
\newtheorem{definition}[theorem]{Definition}
\newtheorem{example}[theorem]{Example}
\theoremstyle{remark}
\def\os{{\cal O}}
\def\ra{\rightarrow}
\def\Z{\mathbb Z}
\def\N{\mathbb N}
\def\qco{\mathfrak Qco}
\def\mfR{\mathbf R }
\def\mfM{\mathbf M}
\newcommand{\Ext}{\operatorname{Ext}}
\newcommand{\id}{\operatorname{id}}
\renewcommand{\theenumi}{\roman{enumi}}
\renewcommand{\labelenumi}{(\theenumi)}
\begin{document}
\pagestyle{myheadings}

\enlargethispage{\baselineskip}
\title{\vspace{-2cm} A Lazard-like theorem for quasi-coherent sheaves
\thanks{2010 {\it Mathematics Subject
Classification}.14F05,18F20,55R25,18A30,46M20.}
\thanks {{\it Keywords}. quasi--coherent sheaves, flat Mittag Leffler, almost
projective, vector bundle}}

\author{Sergio Estrada\footnote{Departamento de Matem\'{a}tica Aplicada,
Universidad de Murcia, Campus del Espinardo, 30100 Murcia, Spain.
e-mail: \texttt{sestrada@um.es}.}
 \hspace{0.3em}  Pedro A. Guil Asensio \footnote{Departamento de
Matem\'{a}ticas,
Universidad de Murcia, Campus del Espinardo, 30100 Murcia, Spain.
e-mail:  \texttt{paguil@um.es}} \hspace{0.3em}
Sinem Odaba\c{s}{\i}\footnote{Departamento de Matem\'{a}ticas,
Universidad de Murcia, Campus del Espinardo, 30100 Murcia, Spain.
e-mail:  \texttt{sinem.odabasi1@um.es}.}}

\date{}
 \maketitle
\renewcommand{\theenumi}{\arabic{enumi}}
\renewcommand{\labelenumi}{\emph{(\theenumi)}}

\begin{abstract}

We study filtration of quasi--coherent sheaves. We prove a version of
Kaplansky Theorem for quasi--coherent sheaves, by using Drinfeld's notion of
almost projective module and the Hill Lemma. We also show a Lazard-like
theorem for flat quasi--coherent sheaves for quasi--compact and semi--separated
schemes which
satisfy the resolution property.

\vspace{0.5cm}

\end{abstract}

\section{Introduction}

The interlacing between quasi--coherent sheaves on an arbitrary scheme $X$ and
certain compatible system of modules was outlined in \cite{relativehom}.

This paper is devoted to exploit this relation in case $X$ is quasi--compact
and semi--separated, and in particular when $X$ is a closed
subscheme of $\mathbb{P}^n(R)$.
Namely, recently Drinfeld in \cite{drinfeld} has proposed new notions of
infinite dimensional vector bundles by using flat Mittag-Leffler modules,
projective modules and almost projective modules. In this paper we focus on the
case of almost projective modules. Following Drinfeld, an $R$-module $M$ is
almost projective whenever is a direct summand of the coproduct of a projective
$R$-module and a finitely generated one.
It is then clear that every projective module is in turn almost projective.
Then a quasi--coherent sheaf $\mathcal M$ on $X$ is locally almost projective if
for every open affine subset $\mathrm{Spec}R\subseteq X$ the $R$--module of
sections $\Gamma(\mathrm{Spec} R,\mathcal{M})$ 
is almost projective. We will show that every
locally almost projective quasi--coherent sheaf on $X$ can be {\it filtered} by
locally countably generated almost projective quasi--coherent sheaves. The
precise formulation of our result is as follows:

\medskip\par\noindent
\textbf{Theorem A}.
{\it Let $\mathcal{M}$ be a quasi--coherent sheaf on $X$. Then, there exists a
continuous chain of quasi--coherent subsheaves $(\mathcal{M}_{\alpha}:\
\alpha\leq\lambda)$ of $\mathcal{M}$ such that:
\begin{itemize}
 \item $\mathcal{M}=\bigcup_{\alpha<\lambda}\mathcal{M}_{\alpha}$.
\item $\mathcal{M}_{\alpha+1}/\mathcal{M}_{\alpha}$ ($\alpha<\lambda$) is
locally
countably generated almost projective.
\end{itemize} }

\medskip\par
In the particular case that $\Gamma(\mathrm{Spec} R,\mathcal{M})$ 
is in fact projective, we get Drinfeld's notion of (infinite dimensional)
vector bundle (see \cite[Section 2, Definition]{drinfeld}) and hence Theorem A
specializes to get that $\mathcal{M}_{\alpha+1}/\mathcal{M}_{\alpha}$
($\alpha<\lambda$) is a locally
countably generated vector bundle. Kaplansky Theorem states that every
projective $R$-module $P$ can be written as
$P=\oplus_{\alpha<\kappa}P_{\alpha}$, with $P_{\alpha}$ countably generated and
projective. This is equivalent to saying that every projective module can be
filtered by countably generated and projective modules. In the quasi--coherent
situation this equivalence is no longer true and in fact it seems unlikely to
get such a direct sum decomposition for vector bundles. But our Theorem A
precisely states that Kaplansky Theorem still holds in $\qco(X)$ when replacing
direct sum decomposition by filtration. Our proof is based on the fact that a
Kaplansky-like theorem is also true for almost projective $R$-modules
(Proposition \ref{ap}).

But perhaps the most interesting application of the techniques provided in this
paper is in Theorem B:
\medskip\par\noindent
\textbf{Theorem B}.
{\it Let $X$ be a quasi--compact and semi--separated scheme having enough
locally countably
generated
vector bundles (for instance if $X$ is noetherian, separated, integral and
locally factorial).
Let $\mathcal{F}$ be a flat quasi-coherent sheaf on $X$. Then
$\mathcal{F}=\varinjlim \mathcal{F}_i$, where
$\mathcal{F}_i$ is locally countably generated and flat with ${\mathcal
V}\mathrm{dim}\,{\mathcal F}_i\leq 1$ (where $\mathcal V$ is the class of all
vector bundles on $X$).}

\medskip\par
Lazard in \cite[Theorem 1.2]{lazard} showed that every left $R$-module is a
directed colimit of
finitely generated free modules, so in particular a directed colimit of flat
and finitely presented modules. Theorem B shows that every quasi--coherent sheaf
is a directed colimit of {\it locally countably generated} flats. 
It is well-known that a finitely related flat module is projective.
This seems not to be true for quasi--coherent sheaves, so we replace in Theorem
B this condition by saying that the quasi--coherent sheaves $\mathcal{F}_i$ are
{\it locally of finite projective dimension} $\leq 1$, or, in other words, the
dimension of $\mathcal{F}_i$ with respect to the class of all vector bundles is
1 at most.
Finally, in case $X\subseteq \mathbb{P}^n(R)$ is a closed subscheme (and $R$ is
commutative noetherian, or just commutative if $n=1$) we point up in Corollary
\ref{cons} that we can find a common upper bound of the projective dimensions of
the $\mathcal{F}_i$'s.

The paper is structured as follows: in Section \ref{prem} we will give all
notions and properties of the classes of modules that we will use in the sequel
(almost projective and flat Mittag-Leffler modules) as well as, the notion of
filtration with respect to a class $\mathcal L$ in a Grothendieck category. We
also give the statement of the Hill Lemma in this section. Section \ref{summar}
is devoted to summarize from \cite{relativehom} the equivalence between
$\qco(X)$ and certain category of representations by modules of a quiver. Then
we will make a explicit construction of this equivalence in case $X\subseteq
\mathbb{P}^n(R)$ is a closed subscheme. Finally Section \ref{filtr} contains
the main results of the paper and, in particular, the proofs of Theorems A and
B.
\section{Preliminaries}\label{prem}

Throughout the paper all rings considered are commutative.
Let us recall from \cite[Section 4]{drinfeld} the definition of an almost
projective module. As we will see, this notion generalizes the
notion of  a projective module.

\begin{definition}
Let $R$ be a ring. An \emph{elementary almost
projective $R$-module} is an $R$-module isomorphic  to a direct sum
of a projective $R$-module and a finitely generated one. An
\emph{almost projective $R$-module} is a direct summand of an
elementary almost projective module.
\end{definition}
\begin{proposition}\label{ap}
Every almost projective module is a direct sum of countably
generated almost projective modules.
\end{proposition}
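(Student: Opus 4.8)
The plan is to combine two facts: that the class of almost projective modules is closed under direct summands, and that every direct summand of a direct sum of countably generated modules is again such a direct sum (the general form of Kaplansky's theorem). The first is immediate from the definition: if $C$ is a direct summand of an almost projective module $M$, and $M$ is in turn a direct summand of $P\oplus F$ with $P$ projective and $F$ finitely generated, then $C$ is a direct summand of $P\oplus F$ and hence almost projective. So the whole problem reduces to producing a decomposition of $M$ into countably generated direct summands.

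To set this up, I would use that $P$, being projective, is a direct summand of a free module $R^{(\kappa)}=\bigoplus_{\kappa}R$. Consequently $M$ is a direct summand of $R^{(\kappa)}\oplus F$, and the latter is a direct sum of finitely generated (hence countably generated) modules $B=\bigoplus_{j\in J}M_j$, obtained by taking the copies of $R$ together with $F$. Let $\pi\colon B\to B$ be an idempotent endomorphism with image $M$.

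The core of the argument is the Kaplansky-type decomposition of the summand $M$ inside $B$. I would construct, by transfinite recursion, a continuous increasing chain $(S_\alpha)_{\alpha\le\lambda}$ of subsets of $J$ with $S_0=\emptyset$ and $S_\lambda=J$, such that, writing $M_{S_\alpha}=\bigoplus_{j\in S_\alpha}M_j$, each $M_{S_\alpha}$ is $\pi$-invariant and each increment $S_{\alpha+1}\setminus S_\alpha$ is countable. The existence of such a chain is the heart of the matter and the main technical obstacle: it rests on a \emph{closing-off} argument, whereby any countable subset of $J$ is enlarged, through $\omega$ successive steps, to a countable subset whose span absorbs the finitely (hence countably) many coordinates appearing in the $\pi$-images of the generators chosen so far, yielding a countable $\pi$-invariant set. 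Setting $A_\alpha=M\cap M_{S_\alpha}=\pi(M_{S_\alpha})$, the invariance together with the standard compatibility bookkeeping of Kaplansky's argument ensures that the $A_\alpha$ form a continuous chain of direct summands of $M$ with countably generated successive quotients $A_{\alpha+1}/A_\alpha$.

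Once this continuous chain of direct summands is in hand, choosing complements $A_{\alpha+1}=A_\alpha\oplus C_\alpha$ gives, by the usual argument for continuous chains of summands, an internal decomposition $M=\bigoplus_{\alpha<\lambda}C_\alpha$ in which each $C_\alpha\cong A_{\alpha+1}/A_\alpha$ is countably generated. Finally, each $C_\alpha$ is a direct summand of $M$, hence of $P\oplus F$, and therefore almost projective; this yields the desired expression of $M$ as a direct sum of countably generated almost projective modules. The only delicate point is the transfinite construction of the $\pi$-invariant chain with countable increments; the remaining steps are formal manipulations of direct summands.
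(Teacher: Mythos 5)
Your proof is correct, and its overall skeleton coincides with the paper's: reduce to the fact that the almost projective module is a direct summand of a direct sum of countably generated modules, decompose it accordingly, and then observe that each resulting summand is itself a direct summand of an elementary almost projective module, hence almost projective (this last closure property, which you verify directly, is also where the paper's proof ends). The difference lies in how the decomposition step is handled. The paper first invokes Kaplansky's theorem for projective modules to write $P=\bigoplus_{i\in I}P_i$ with each $P_i$ countably generated projective, and then cites the general summand theorem (Anderson--Fuller, Theorem 26.1: a direct summand of a direct sum of countably generated modules is again such a direct sum) as a black box. You bypass the first citation by embedding $P$ as a summand of a free module $R^{(\kappa)}$ --- already a direct sum of cyclic modules --- and you re-prove the summand theorem yourself via the classical transfinite closing-off argument. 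Your sketch of that argument is sound: a subsum $M_S$ invariant under the idempotent $\pi$ is automatically invariant under $1-\pi$, so it splits as $(M_S\cap \operatorname{Im}\pi)\oplus(M_S\cap \operatorname{Ker}\pi)$; the quotients $A_{\alpha+1}/A_\alpha$ are direct summands of the countably generated $M_{S_{\alpha+1}}/M_{S_\alpha}$ and hence countably generated; and continuity at limit ordinals holds because subsums commute with increasing unions. One small point worth making explicit in a full write-up: at successor stages the closing-off must be performed relative to the possibly uncountable $S_\alpha$, i.e., one closes off $S_\alpha\cup C$ for a countable set $C$ of new indices and checks that only countably many indices outside $S_\alpha$ are added at each of the $\omega$ steps; this is standard and is exactly what your ``countable increment'' condition encodes. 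The trade-off between the two treatments: the paper's proof is three lines at the cost of two external citations, while yours is self-contained modulo the closing-off details, and slightly more economical in its inputs, since embedding into a free module means you need only the general summand theorem and never Kaplansky's decomposition of projectives.
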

\begin{proof}
Let $T$ be an almost projective $R$-module. Then there exists a
projective  $R$-module $P$ and  a finitely generated $R$-module $M$
such that $T$ is a direct summand of $P\oplus M$. By Kaplansky's
theorem, we know that $P$ is a direct sum of countably generated
projective $R$-modules, say  $P=\bigoplus_{i\in I}P_i$. Then, there
exists an $R$-module $K$ such that
$$\left(\bigoplus_{i\in I}P_i\right)\oplus M =T \oplus K.$$
Since $T$ is a direct summand of a direct sum of countably generated
modules, $T$ is again a direct sum of countably generated modules by
~\citet[Theorem 26.1]{fuller}. Say $T=\bigoplus_{j\in J}T_j$ for some
index set $J$ where $T_j$ is a countably generated module for every
$j\in J$. Clearly, each $T_j$ is a direct summand of $P\oplus M$.
This implies that $T_j$ is an almost projective $R$-module for each
$j\in J$ and $T$ is a direct sum of countably generated almost
projective modules.
\end{proof}

\begin{definition}
Let $R$ be a ring and $M$ be a right $R$-module. $M$ is a
\emph{Mittag-Leffler} module if the canonical map $M\otimes_R
\prod_{i\in I}M_i \rightarrow \prod_{i\in I}M \otimes_R M_i$ is
monic for each family of left $R$-modules $(M_i | \ i\in I)$.
\end{definition}

\begin{theorem}(\cite{RG})\label{ML}
The following conditions are equivalent:
\begin{enumerate}
\item $M$ is flat Mittag-Leffler $R$-module.
\item Every finite or countable subset of $M$ is contained in a
countably generated projective submodule $P \subseteq M$ such that
$M/P$ is flat.
\end{enumerate}
\end{theorem}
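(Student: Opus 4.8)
The plan is to establish the two implications separately, the engine being the intermediate fact that \emph{a countably generated flat Mittag--Leffler module is projective}; granting this, the equivalence follows from a purity analysis. Throughout I would use the standard correspondence between flatness and purity: a short exact sequence $0 \to P \to M \to M/P \to 0$ is pure exact as soon as $M/P$ is flat (since then $\operatorname{Tor}_1(N,M/P)=0$ for all $N$), and conversely, if $M$ is flat and $P$ is a pure submodule then $M/P$ is flat.

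For $(2)\Rightarrow(1)$, flatness of $M$ is immediate: by hypothesis every finite subset of $M$ lies in a projective, hence flat, submodule $P$, so $M$ is a directed union of flat submodules and is therefore flat. For the Mittag--Leffler property, I would note that each witnessing $P$ is pure in $M$ because $M/P$ is flat. Given any finite subset $X \subseteq M$, choose such a $P$ with $X \subseteq P$; since $P$ is projective it is Mittag--Leffler, so the inverse system $\{\Hom(F,Q)\}$ over finitely generated submodules $F$ of $P$ containing $\langle X\rangle$ satisfies the stabilization condition, and purity of $P$ in $M$ transports this stabilization to the corresponding system computed inside $M$. Hence $M$ is Mittag--Leffler.

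For $(1)\Rightarrow(2)$, given a countable subset $S \subseteq M$, I would construct $P$ by a countable closing-off procedure, building an ascending chain $S = X_0 \subseteq X_1 \subseteq \cdots$ of countable subsets and alternating two enlargements at each step. The \emph{purity} step adjoins, for every finite $R$-linear system with parameters in the current stage that is solvable in $M$, a solution; this makes $P = \bigcup_n \langle X_n\rangle$ a pure submodule of $M$, and since $M$ is flat it forces $M/P$ flat. The \emph{Mittag--Leffler} step adjoins, for each finitely generated submodule of the current stage, finitely many elements realizing the stabilization data furnished by the Mittag--Leffler condition on $M$, so that $P$ inherits the property (a pure submodule of a Mittag--Leffler module is again Mittag--Leffler, and the added elements keep the local witnesses inside $P$). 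The resulting $P$ is countably generated, flat, and Mittag--Leffler, hence projective by the intermediate fact, with $M/P$ flat, which is precisely $(2)$.

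It remains to address the intermediate fact and to flag the main obstacle. The projectivity of a countably generated flat Mittag--Leffler module $P$ I would obtain from Lazard's presentation \cite{lazard}: write $P$ as the colimit of a countable chain $R^{a_0} \to R^{a_1} \to \cdots$ of finitely generated free modules, yielding the telescope sequence $0 \to \bigoplus_n R^{a_n} \to \bigoplus_n R^{a_n} \to P \to 0$; flatness together with the Mittag--Leffler stabilization of the inverse system $\{\Hom(R^{a_n},-)\}$ forces this sequence to split, exhibiting $P$ as a direct summand of a free module. The hard part will be the $(1)\Rightarrow(2)$ induction: one must interleave the purity and Mittag--Leffler enlargements so that a \emph{single} countable union is simultaneously pure (to make $M/P$ flat) and closed under the Mittag--Leffler witnesses (to make $P$ projective), while keeping every stage countable. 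Managing this simultaneous closure, together with the splitting argument in the projectivity lemma, is the technical crux.
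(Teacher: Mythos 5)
First, a point of reference: the paper does not prove Theorem~\ref{ML} at all --- it is quoted verbatim from Raynaud--Gruson \cite{RG} --- so your attempt has to be measured against the standard proof there (see also \cite{triflaj} and \cite[Theorem 2.2]{drinfeld}). Much of your outline is sound: the $(2)\Rightarrow(1)$ direction works exactly as you say (each witnessing $P$ is pure because $M/P$ is flat, and the injectivity of $M\otimes\prod M_i\to \prod(M\otimes M_i)$ is checked by factoring through $P\otimes \prod M_i$, using that $P$ is projective hence Mittag--Leffler and that purity keeps all the relevant maps injective), and your proof of the intermediate fact that a countably generated flat Mittag--Leffler module is projective --- the telescope $0\to\bigoplus_n R^{a_n}\to \bigoplus_n R^{a_n}\to P\to 0$ split via $\Ext^1(P,N)\cong \varprojlim^1\Hom(R^{a_n},N)=0$ for countable Mittag--Leffler inverse systems --- is precisely the Raynaud--Gruson/Drinfeld argument.

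The genuine gap is in the purity step of your $(1)\Rightarrow(2)$ closing-off, and it is not the bookkeeping issue you flag but a cardinality obstruction. Purity of $P$ in $M$ quantifies over \emph{all} finite systems $\sum_j r_{ij}x_j=p_i$ with coefficient matrices $(r_{ij})$ ranging over $R$; with parameters in a countable stage there are $|R|\cdot\aleph_0$ such systems, so over an uncountable ring adjoining one solution per solvable system destroys countable generation at the very first step, and no interleaving with the Mittag--Leffler enlargements can restore it --- your procedure produces an $|R|$-generated pure submodule, which is the trivial bound valid for arbitrary modules and makes no use of the hypothesis. This is exactly why the actual proof derives purity \emph{from} the Mittag--Leffler condition rather than by brute-force closure: one writes $M=\varinjlim F_i$ with $F_i$ finitely generated free (Lazard \cite{lazard}), reformulates Mittag--Leffler as the domination condition (for each $i$ there is $j\geq i$ such that $u_{ij}$ factors through $u_{ik}$ for every $k\geq j$), and extracts a countable dominated subchain through the given countable subset; its colimit is then a countably generated submodule whose inclusion in $M$ is pure because of domination, with only countably many elements adjoined in total. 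Note also that your separate ``Mittag--Leffler step'' is redundant --- as you yourself observe, a pure submodule of a Mittag--Leffler module is Mittag--Leffler --- so the one place where the hypothesis (1) must enter your construction is precisely the purity step where your sketch does not use it.
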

Notice that the second condition of the previous theorem allows to write
every flat Mittag-Leffler $R$-module $M$ as a direct union
of projective and countably generated submodules.

Let $\mathcal{C}$ be a Grothendieck category. A well-ordered direct
system of objects of $\mathcal{C}$, $\mathcal{A}=(A_\alpha| \ \alpha
\leq \mu)$, is said to be \emph{continuous} if $A_0=0$ and
$A_\alpha=\varinjlim_{\beta < \alpha} A_\beta$ for all limit
ordinals $\alpha \leq \mu$. If all  morphisms in the system, $f_{
\alpha\beta}$, are monomorphisms then the sequence $\mathcal{A}$ is
called a \emph{continuous chain of modules}.

Let $\mathcal{C}'$ be a class of  objects of $\mathcal{C}$. An object $M\in
\mathcal{C}$ is said to be \emph{$\mathcal{C}'$-filtered} if there is a
continuous chain $\mathcal{A}=(A_\alpha| \ \alpha
\leq \mu)$ of subobjects of $M$ with $M=A_\mu$ and each of the objects
$A_{\alpha+1}/A_\alpha$ is isomorphic to an
object of $\mathcal{C}'$, where $\alpha < \mu$. The chain
$(A_\alpha|\ \alpha \leq \mu)$ is called a
\emph{$\mathcal{C}'$-filtration} of $M$.

The following lemma, known as Hill Lemma, helps us to expand a single
filtration of a module $M$ to a complete lattice of its submodules having  
some good properties. It is one of the most important tools to prove our main
results.

\begin{lemma}[Hill Lemma]\cite[Theorem 4.2.6]{triflaj}\label{hill}
Let $R$ be a ring, $\kappa$ an infinite regular cardinal and
$\mathcal{C}$
 a set of $< \kappa$-presented modules. Let  $M$ be a module with a 
$\mathcal{C}$-filtration $\mathcal{M}=(M_\alpha |\ \alpha \leq \sigma)$ for some
ordinal $\sigma$. Then there is a family $\mathcal{H}$ consisting of submodules
of $M$ such that:
\begin{enumerate}
\item $\mathcal{M} \subseteq \mathcal{H}$.
\item $\mathcal{H}$ is closed under arbitrary sums and intersections 
(that is, $\mathcal{H}$ is a complete sublattice of the lattice of submodules of
$M$).
\item If $N,P \in \mathcal{H}$  such that $N \subseteq P$, 
then there exists a $\mathcal{C}$-filtration $(\overline{P}_\gamma  | \gamma
\leq \tau)$ of the module $\overline{P}=P/N$  $\tau \leq \sigma$ such that  and
for each $\gamma < \tau$, there is a $\beta < \sigma$ with 
$\overline{P}_{\gamma+1}/ \overline{P}_\gamma$ isomorphic to
$M_{\beta+1}/M_\beta $.
\item If $N\in \mathcal{H}$ and $X$ is a subset of $M$ of cardinality $<
\kappa$, then there is a $P\in \mathcal{H}$ such that $N \cup X \subseteq P$
and $P/N$ is $\kappa$-presented.
\end{enumerate}
\end{lemma}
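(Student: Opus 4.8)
The plan is to build $\mathcal{H}$ from the generators of the consecutive quotients of the given filtration, organised by a set-valued ``support'' function that records the relations. First, for each $\alpha<\sigma$ I would use that $M_{\alpha+1}/M_\alpha$ is isomorphic to an object of $\mathcal{C}$, hence $<\kappa$-presented, to fix a subset $X_\alpha\subseteq M_{\alpha+1}$ of cardinality $<\kappa$ whose image generates $M_{\alpha+1}/M_\alpha$, together with a presenting set of $<\kappa$ relations. Since $M_0=0$ and the chain is continuous, a transfinite induction gives $M_\alpha=\sum_{\beta<\alpha}RX_\beta$ for every $\alpha$, so that $\bigcup_{\beta<\sigma}X_\beta$ generates $M$. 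Each chosen relation of $X_\alpha$ evaluates to an element of $M_\alpha$; fixing an expression of that element as a finite $R$-combination of $\bigcup_{\beta<\alpha}X_\beta$ produces a finite set of indices below $\alpha$, and collecting these over all $<\kappa$ relations while invoking the regularity of $\kappa$ yields a function $\varphi\colon\sigma\to[\sigma]^{<\kappa}$ (subsets of cardinality $<\kappa$) with $\varphi(\alpha)\subseteq\alpha$ for all $\alpha$.

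Call a subset $S\subseteq\sigma$ \emph{closed} if $\varphi(\alpha)\subseteq S$ whenever $\alpha\in S$, put $M_S=\sum_{\alpha\in S}RX_\alpha$, and set $\mathcal{H}=\{M_S:\ S\text{ closed}\}$. Property (1) is immediate, since every initial segment $\{\beta:\beta<\alpha\}$ is closed (as $\varphi(\beta)\subseteq\beta$) and yields exactly $M_\alpha$. The family of closed sets is visibly closed under arbitrary unions and intersections, and $M_{\bigcup_iS_i}=\sum_iM_{S_i}$ holds for free; so the only genuine content of property (2) is the compatibility of intersections, namely $M_{S\cap T}=M_S\cap M_T$ for closed $S,T$.

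This intersection identity is the crux and the step I expect to be the main obstacle. I would prove, by transfinite induction on $\alpha\leq\sigma$, the sharper statement $M_S\cap M_\alpha=M_{S\cap\alpha}$ for every closed $S$; the identity of property (2) then follows by applying this to $T$ in place of the initial segments. Limit stages are handled by continuity, and a successor $\alpha+1$ with $\alpha\in S$ is immediate because $RX_\alpha\subseteq M_S$. The delicate case is $\alpha\notin S$: given $z\in M_S\cap M_{\alpha+1}$, I would examine the largest index $\gamma$ occurring in an expression of $z$ over $\bigcup_{\delta\in S}X_\delta$, observe that its leading coefficients constitute a relation of $M_{\gamma+1}/M_\gamma$, and then use closedness ($\varphi(\gamma)\subseteq S$) to rewrite that leading component over $\{X_\beta:\beta\in\varphi(\gamma)\}$, thereby re-expressing $z$ with a strictly smaller top index; descending in this way forces $z$ into $M_{S\cap\alpha}$.

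Granting the identity, property (3) follows by enumerating $P\setminus N$ increasingly as $(\gamma_\xi)_{\xi<\tau}$ and setting $\overline{P}_\xi=M_{N\cup\{\gamma_\eta:\eta<\xi\}}/M_N$; each index set here is again closed, and the intersection identity identifies each quotient $\overline{P}_{\xi+1}/\overline{P}_\xi$ with $M_{\gamma_\xi+1}/M_{\gamma_\xi}$, one of the prescribed factors. Finally, for property (4) I would take an index set of cardinality $<\kappa$ supporting $N\cup X$ and close it under $\varphi$ by iterating $\omega$ times; regularity of $\kappa$ keeps the resulting index set of cardinality at most $\kappa$, so adjoining it to $N$ produces a closed $P$ with $N\cup X\subseteq M_P$ and with $M_P/M_N$ generated and related by at most $\kappa$ elements, hence $\kappa$-presented.
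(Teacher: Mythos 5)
The paper does not actually prove this lemma: it is quoted from \cite[Theorem 4.2.6]{triflaj}, so the only fair comparison is with the standard proof in that reference, which your proposal essentially reconstructs --- generating sets $X_\alpha$ with $M_{\alpha+1}=M_\alpha+\langle X_\alpha\rangle$, a notion of closed subset of $\sigma$, the family $\mathcal{H}=\{M_S : S \text{ closed}\}$, the identity $M_S\cap M_\alpha=M_{S\cap\alpha}$ proved by descent on the top index of a representation, an increasing enumeration for property (iii), and $\omega$-fold closure plus regularity of $\kappa$ for property (iv). Your one genuine variation is to define closedness combinatorially, via the support function $\varphi$, rather than by the module condition $\langle X_\alpha\rangle\cap M_\alpha\subseteq M_{S\cap\alpha}$ used in \cite{triflaj}; this buys you for free that the closed index sets form a complete sublattice of the power set of $\sigma$, and it is legitimate because the evaluated relations generate $\langle X_\alpha\rangle\cap M_\alpha$. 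That last point, however, rests on a fact you use silently and should record: for $\kappa$ regular, a $<\kappa$-presented module has $<\kappa$-generated relation module with respect to \emph{any} generating set of cardinality $<\kappa$ (the Schanuel-type argument), which is what entitles you to a presenting set of $<\kappa$ relations on your chosen generators $X_\alpha$ rather than on some unrelated presentation.

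The one step that does not work as written is your derivation of property (ii): the identity $M_S\cap M_T=M_{S\cap T}$ does not follow ``by applying this to $T$ in place of the initial segments,'' because $M_T$ is not of the form $M_\alpha$ and your induction quantifies only over initial segments. The statement is true, but it needs its own short induction using closedness of \emph{both} sets: for $0\neq z\in M_S\cap M_T$ let $\gamma+1$ be least with $z\in M_{\gamma+1}$; the initial-segment identity forces $\gamma\in S$ and $\gamma\in T$ (otherwise $z\in M_\gamma$); write $z=y+x=y'+x'$ with $y\in M_{S\cap\gamma}$, $y'\in M_{T\cap\gamma}$ and $x,x'\in\langle X_\gamma\rangle$; then $x-x'=y'-y\in\langle X_\gamma\rangle\cap M_\gamma\subseteq M_{\varphi(\gamma)}\subseteq M_{S\cap T\cap\gamma}$, hence $z-x\in M_S\cap M_T\cap M_\gamma$, and induction on $\gamma$ finishes since $x\in M_{S\cap T}$. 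A similar small repair is needed in (iii): $N\subseteq P$ means $M_S\subseteq M_T$, not $S\subseteq T$, so you should first replace $S$ by $S\cap T$, which is harmless once the intersection identity is available because $M_{S\cap T}=M_S\cap M_T=N$. With these routine completions your sketch is correct, including the cardinality bookkeeping in (iv), where allowing the closed set to reach size $\kappa$ when $\kappa=\aleph_0$ is exactly what matches the paper's ``$\kappa$-presented'' in that item.
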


\section{$\qco(X)$ as a Category of Representations}\label{summar}
Let $X$ be a scheme and $\qco(X)$ be the
category of quasi-coherent
sheaves on  $X$. Following \cite{relativehom}, the aim of this section is to
give an equivalent category to $\qco (X)$ which will allow us to understand
$\qco(X)$ in terms of certain compatible systems of modules. 

A \emph{ quiver} $Q$ is a directed graph which is given by the pair
$(V,E)$, where  $E$  denotes the set of all edges of the quiver $Q$
and $V$ is the set of all vertices. A \emph{representation}
$\mathbf{R}$ of a quiver $Q$ in the category of commutative rings means that
for
each vertex $v\in V$ we have a ring $R (v)$ and a ring homomorphism
$\mfR(a):R(v)\longrightarrow R(w),$ for each edge $a:v\rightarrow
w$.

An \emph{$\mathbf{R}$-module} $\mathbf{M}$ is given by an $R
(v)$-module $M(v)$, for each  vertex $v\in V$, and an $R(v)$-linear
morphism $\mathbf{M}(a):M(v) \longrightarrow M (w)$ for each edge
$a:v \rightarrow w \in E$.  Since $\mfR(a)$ is a ring homomorphism
for an edge $a:v\ra w$, the  $R(w)$-module $M(w)$ can be thought as
an $R(v)$-module.

An $\mathbf{R}$-module $\mathbf{M}$ is \emph{quasi-coherent} if for
each edge $a:v \ra w$, the morphism
$$\id _{R(w)} \otimes_{R(v)} \mfM(a):R(w) \otimes_{R(v)}  M(v) \rightarrow  R(w) \otimes_{R(w)} M(w) $$
is an  $R(w)$-module isomorphism. For a fixed quiver $Q$ and a fixed
representation
$\mfR$ of $Q$, the category of quasi-coherent
$\mfR$-modules  is defined as the full subcategory of the
category $\mfR$-Mod containing all quasi-coherent $\mfR$-modules.
We will denote it by $\mfR_{Qco}$-Mod. We will say that the
\emph{representation $\mfR$ is flat} if the ring $R(w)$ is a flat
$R(v)$-module for each edge $a:v\ra w$. If the representation $\mfR$
is flat, then the category $\mathbf{R}_{Qco}$-Mod is a
Grothendieck category.

Consider the category of quasi-coherent sheaves on a scheme
$(X,\os_X)$, denoted by $\qco(X)$. By the definition of a scheme,
the scheme $X$ has a family $\mathcal B$ of  affine open subsets
which is a base for $X$ such that this family uniquely determines
the scheme $(X,\os_X)$(for example, it is enough to take the family
of  the affine open subsets covering $X$ and $U \cap V$ for all
$U,V$ in this family). And also this family helps to uniquely
determine   the quasi-coherent $\os_X$-modules. That is, a
quasi-coherent $\os_X$-module is determined by giving an
$\os_X(U)$-module $M_U$ for each $U$ and a linear map $f_{UV}:M_U
\rightarrow M_V$ whenever $V \subseteq U$, $V,U \in \mathcal{B}$,
satisfying; 
\begin{enumerate}
\item $\os_X (V) \otimes_{\os_X (U)} M_U \rightarrow 
\os_X (V) \otimes_{\os_X (V)} M_V$ is an isomorphism with respect to the
morphism $\id \otimes f_{UV} $ for all $V \subseteq U$.
\item If $W \subseteq V \subseteq U$, where $W,V,U \in \mathcal{B}$, 
then the composition \mbox{$M_U \rightarrow M_V \rightarrow M_W$} gives $M_U
\rightarrow M_W$.
\end{enumerate}

In this way, we are able to construct a quiver $Q=(V,E)$ with
respect to the scheme $(X,\os_X)$. Let $\mathcal B$ be a base of the
scheme $X$ containing   affine open subsets such that $\os_X$ is
$\mathcal B$-sheaf. Now, define a quiver $Q$ having the family
$\mathcal B$ as the set of  vertices, and an edge between two affine
open subsets $U,V \in \mathcal B$ as the only arrow $U \ra V$
provided that $V \subsetneq U$. Fix this quiver. Take the
representation $\mathbf{R}$ as $R(U)=\os_X(U)$ for each $U \in
\mathcal B$ and the restriction map $\rho_{UV}:\os_X(U) \rightarrow
\os_X(V)$ for the edge $U\rightarrow V$. Then the functor
$$\Phi :\qco (X) \longmapsto \mfR_{Qco}\textrm{-Mod},$$
which was defined by the above argument is well-defined and, in fact, it is
an equivalence of categories. Because of this equivalence we will often identify
a quasi--coherent sheaf $\mathcal{M}$ with its corresponding quasi--coherent
$R$-module $\mathbf{M}$ and vice versa.

\begin{example}\label{projn2}
Let $X=\mathbb{P}_R^n=\mathrm{Proj}\, R[x_0,\ldots,x_n]$, $n\in \N$. Then
take a base
containing the affine open sets $D_+(x_i)$  for all $i=0,\ldots n$,
and all possible intersections. In this case, our base contains  basic 
open subsets of this form
$$D_+( \prod_{i \in v}x_i),$$ where $v \subseteq \{0,1, \ldots ,n \}$.
So, the vertices of our quiver are all subsets of
$\{0,1,\ldots,n\}$ and we have only one edge $v\ra w$ for each
$v\subseteq w \subseteq \{0,1,\ldots,n\}$ since $D_+(\prod_{i \in
w}x_i) \subseteq D_+(\prod_{i \in v}x_i)$. Then the structure sheaf takes the
following values  
$$\os_{\mathbb{P}_R^n}(D(\prod_{i \in v}x_i)) =R[x_0,\ldots,x_n]_{(\prod_{i \in v}x_i)}$$
on each basic open set, and  it is isomorphic to the polynomial ring on
the ring $R$ with the variables $\frac{x_j}{x_i}$ where $j=0, \ldots
,n$ and $i \in v$. We will denote this polynomial ring by $R[v]$.
Then the representation $\mathbf{R}$ with respect to this quiver has
vertex $R(v)=R[v]$ and edges $R[v] \hookrightarrow R[w]$ as long as
$v \subseteq w$.

Finally, an $\mathbf{R}$-module $\mathbf{M}$ is quasi-coherent if
and only if
$$S_{vw}^{-1} f_{vw} :S_{vw}^{-1}M(v) \longrightarrow S_{vw}^{-1}M(w)=M(w)$$
is an isomorphism as $R[w]$-modules for each $f_{vw}:M(v)
\rightarrow M(w)$ where $S_{vw}$ is the multiplicative group
generated by the set $\{x_j/x_i|\textrm{ }  j \in w \setminus v
,\textrm{ }i \in v\} \cup \{1\}$ and $v\subset w$.

\end{example}

A closed subscheme  $X \subseteq \mathbb{P}_R^n$ is given by a
quasi-coherent sheaf of ideals, i.e. we have an ideal $J(v)
\subseteq R[v]$ for each $v$ with
$$R[w]\otimes_{R[v]} J(v) \cong J(w),$$
when $v \subseteq w$. This means $J(v) \rightarrow J(w)$ is the
localization of $J(v)$ by the same multiplicative set $S_{vw}$ as
above. Then
$$\frac{R[v]}{J(v)} \rightarrow \frac{R[w]}{J(w)}$$
 is a localization with respect to the set $\overline{S}_{vw}$. To simplify the
notation, we will use
 $R(v)$ instead of $\frac{R[v]}{J(v)}$ to represent the representation of rings
associated to a closed subscheme $X$ of $\mathbb{P}_R^n$.

\begin{example}
The construction of the previous example can be extended to quasi--compact and
semi--separated schemes.
Let $X$ be a quasi--compact and semi--separated scheme, and let
$\mathcal{U}=\{U_0,\ldots,U_n\}$ be an affine open cover of $X$. Let us
construct a quiver $Q_X$ whose vertices
are the subsets $ v\subseteq  \{0, 1, 2, \ldots , n\}$,  $v\neq \emptyset$, and
where $v$ represents the affine open $\cap_{k\in v}U_k$, and
where there is a unique arrow $v\to w$ when $v\subseteq w$, and corresponds to
the canonical inclusion $\cap_{k\in w}U_k\hookrightarrow \cap_{k\in v}U_k$.
Then a quasi--coherent sheaf $\mathcal M$ on $X$ corresponds to a
quasi--coherent $\mathbf{R}$-module $\mathbf{M}$ on $Q_X$ and vice versa.
\end{example}

\section{Filtration of quasi--coherent sheaves}\label{filtr}

It is known that there is a bijection between the class of
vector bundles in the sense of classical algebraic geometry and the
class of all locally free coherent $\os_X$-modules of finite rank (see
\cite{hartshorne}).
So, in Sheaf Theory, a vector bundle is a
locally free coherent $\os_X$-module of finite rank.
Following \citet{drinfeld}, we can achieve at least three different
generalizations of this definition. The first one is getting just by avoiding
the finitely generated assumption.

\begin{definition}\cite[Section 2]{drinfeld}\label{v.c}
Let $(X,\os_X)$ be a scheme. A quasi-coherent $\os_X$-module
$\mathcal{F}$ is said to be a \emph{vector bundle} (in the sense
\cite{drinfeld})  if $\mathcal{F}(U)$ is a projective
$\os_X(U)$-module for every  affine open subset $U$ of $X$.
\end{definition}

But then, according to \cite[Sections 2 and 4]{drinfeld}, we get other
generalizations of classical vector bundles if in the previous
definition we replace ``projective'' by ``flat Mittag-Leffler'' or by ``almost
projective''. These yield to the notions of {\it locally flat Mittag-Leffler}
and {\it locally almost projective} quasi--coherent sheaf.

So, if $\mfR$ is a representation of the structure sheaf of the scheme
$(X,\os_X)$, a
 vector bundle (resp. a locally flat Mittag-Leffler or a locally almost
projective) $\mathcal M$ corresponds to a
 unique element $\mathbf M$ in $\mfR_{Qco}$-Mod such
 that each $M(u)$ is a projective (resp. flat Mittag--Lefler or almost
projective) $R(u)$-module for every vertex $u$. 

We are interested whether the converse of this property holds. Namely, if
$\mathbf{M}$ is a quasi--coherent $\mathbf{R}$-module over some quiver
$Q=(V,E)$ representing the scheme $(X,\mathcal{O}_X)$, and such that
$M(u)$ is projective (resp. flat Mittag-Leffler or almost projective) for each
vertex $u\in V$, then $\mathbf{M}$ must be a vector bundle (resp. locally flat
Mittag-Leffler or locally almost projective) in the above sense. To this end we
need the following lemma, which is essentially due to Raynaud Gruson and
pointed up by Drinfeld.
\begin{lemma}\label{prev_ML}
 Let $R\to S$ be a ring map and let $M$ be an $R$-module. If $M$ is a
projective (resp. flat Mittag-Leffler or almost projective),
then $S\otimes_R M$ is a projective (resp. flat Mittag-Leffler or
almost
projective) $S$-module. If, in addition, $R\to S$ is faithfully flat then the
converse is also true, that is, $S\otimes_R M$ being projective
(resp. flat Mittag-Leffler or almost
projective) $S$-module implies that $M$ is such.
\end{lemma}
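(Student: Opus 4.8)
The plan is to handle the two directions separately, noting that the forward implication and the faithfully flat converse have rather different characters. Let me sketch each.

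For the forward direction, I would prove each of the three cases in turn, using the characterizations available. The projective case is classical: if $M$ is a direct summand of a free $R$-module $R^{(I)}$, then applying the right-exact functor $S\otimes_R(-)$ gives that $S\otimes_R M$ is a direct summand of $S\otimes_R R^{(I)}\cong S^{(I)}$, hence projective. The almost projective case follows immediately from the projective case together with the definition: if $T$ is a direct summand of $P\oplus N$ with $P$ projective and $N$ finitely generated, then $S\otimes_R T$ is a direct summand of $(S\otimes_R P)\oplus(S\otimes_R N)$, where $S\otimes_R P$ is projective by the previous case and $S\otimes_R N$ is clearly finitely generated over $S$; so $S\otimes_R T$ is almost projective by definition.

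The flat Mittag-Leffler case is the one requiring the real input from Raynaud--Gruson, and I would invoke Theorem \ref{ML}. Given a finite or countable subset of $S\otimes_R M$, I would try to trace it back through the tensor product to a countable subset of $M$, which by condition (2) of Theorem \ref{ML} sits inside a countably generated projective $P\subseteq M$ with $M/P$ flat. Then $S\otimes_R P$ is projective and countably generated over $S$ by the projective case, $S\otimes_R M/S\otimes_R P\cong S\otimes_R(M/P)$ is flat by right-exactness and flatness of $M/P$ (using that flatness is preserved under base change), and the chosen elements lie in $S\otimes_R P$. This recovers condition (2) for $S\otimes_R M$, so it is flat Mittag-Leffler. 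The delicate point here is verifying that an arbitrary countable family in $S\otimes_R M$ can be accommodated inside the image of such a $P$; I would argue that each simple tensor involves only finitely many elements of $M$, so a countable family involves only countably many, which can all be captured in a single countably generated projective submodule.

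For the faithfully flat converse, the main obstacle is transferring a property back down along $R\to S$. The guiding principle is faithfully flat descent, and I expect this to be the hardest part of the whole lemma. For the projective and almost projective cases I would appeal to the descent results of Raynaud--Gruson (which is precisely why the lemma is attributed to them): faithful flatness of $S$ over $R$ ensures that projectivity descends, and almost projectivity can then be recovered. For the flat Mittag-Leffler case I would again use Theorem \ref{ML} in reverse, using faithful flatness to check that a countable subset of $M$ maps into a submodule $P\subseteq M$ with the required properties by descending the corresponding structure from $S\otimes_R M$; faithful flatness is what guarantees that monomorphisms, flatness, and the relevant module-theoretic conditions reflect from $S$ back to $R$. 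Since the lemma explicitly credits Raynaud--Gruson and Drinfeld, I would not reprove descent from scratch but rather cite \cite{RG} and \cite{drinfeld} for the converse directions and give the short tensor-product arguments only for the forward implications.
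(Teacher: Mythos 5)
Your proposal is correct and follows essentially the same route as the paper, which likewise disposes of the forward direction by direct tensor-product arguments (declaring them straightforward) and simply cites Drinfeld \cite[Section 2 and Theorem 4.2(i)]{drinfeld} for the faithfully flat converse, exactly as you do. The only point worth making explicit in your flat Mittag-Leffler case is that $S\otimes_R P\to S\otimes_R M$ is injective, which holds because $M/P$ flat forces $P$ to be a pure submodule of $M$, so tensoring with $S$ preserves the exactness of $0\to P\to M\to M/P\to 0$.
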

\begin{proof}
If $M$ is a projective (resp. flat Mittag-Leffler or almost projective)
$R$-module, it is straightforward to check that $S\otimes_R M$ is a
projective
(resp. flat Mittag-Leffler or almost projective) $S$-module.
For the second part see \cite[Section 2]{drinfeld} and \cite[Theorem
4.2(i)]{drinfeld}. 

\end{proof}
\begin{proposition}
 Let $X$ be a scheme. Let $\mathcal{F}$ be a quasi-coherent sheaf on $X$. The
following are equivalent:
\begin{enumerate}
\item $\mathcal{F}$ is vector bundle (resp. locally flat Mittag--Leffler or
locally almost projective)  and
\item there exists an affine open covering $X=\cup_{i\in I} U_i$ such that the
$\mathcal{O}_X(U_i)$-module
$\mathcal{F}(U_i)$ is projective (resp. flat Mittag--Lefler or almost
projective) for every $i\in I$.
\end{enumerate}
\end{proposition}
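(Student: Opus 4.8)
The implication (1) $\Rightarrow$ (2) is immediate: if the sections of $\mathcal{F}$ have the relevant property over \emph{every} affine open, then in particular they do over the members of any affine open covering of $X$. So I will concentrate on (2) $\Rightarrow$ (1), writing $\mathcal{P}$ for whichever of the three properties (projective, flat Mittag--Leffler, almost projective) is under consideration.

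I would fix an arbitrary affine open $V=\Spec A\subseteq X$ and aim to show that the $A$-module $M:=\mathcal{F}(V)$ has property $\mathcal{P}$. On the affine scheme $V$ the restriction $\mathcal{F}|_V$ is $\widetilde{M}$, so $\mathcal{F}(D(f))\cong A_f\otimes_A M$ for every basic open $D(f)\subseteq V$. The plan is first to cover $V$ by finitely many such basic opens on which $\mathcal{F}$ already has property $\mathcal{P}$, and then to descend the property down to $M$ via the faithfully flat direction of Lemma~\ref{prev_ML}.

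For the local step I would argue as follows. Given $x\in V$, choose $i$ with $x\in U_i$ from the covering $X=\bigcup_{i\in I}U_i$; since $V\cap U_i$ is open in $V$, there is a basic open $D(f)$ of $V$ with $x\in D(f)\subseteq V\cap U_i$. As $D(f)$ is then an affine open subscheme of $U_i=\Spec B_i$ with $B_i=\os_X(U_i)$, quasi-coherence of $\mathcal{F}$ yields the base-change formula
\[
\mathcal{F}(D(f))\;\cong\;\os_X(D(f))\otimes_{B_i}\mathcal{F}(U_i).
\]
Since $\mathcal{F}(U_i)$ has property $\mathcal{P}$ over $B_i$ by hypothesis, the forward (no flatness needed) half of Lemma~\ref{prev_ML} shows $\mathcal{F}(D(f))$ has property $\mathcal{P}$ over $\os_X(D(f))=A_f$. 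Letting $x$ range over $V$ and using quasi-compactness of $\Spec A$, I would extract finitely many $f_1,\dots,f_m$ with $(f_1,\dots,f_m)=A$ such that each $M_{f_j}\cong\mathcal{F}(D(f_j))$ has property $\mathcal{P}$ over $A_{f_j}$.

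The final, and most delicate, step is the descent. The map $A\to S:=\prod_{j=1}^m A_{f_j}$ is faithfully flat because the $D(f_j)$ cover $\Spec A$, and $S\otimes_A M\cong\prod_{j=1}^m M_{f_j}$. Here I must verify that a module over the finite product $S$ is projective (resp. flat Mittag--Leffler or almost projective) exactly when each of its components over the factors $A_{f_j}$ is; this is the point where care is needed, though it is routine for each of the three classes, since the idempotents of $S$ split every $S$-module into the product of its components and all three properties are inherited by and detected on such direct factors. Granting this, $S\otimes_A M$ has property $\mathcal{P}$ over $S$, and the converse, faithfully flat direction of Lemma~\ref{prev_ML} then gives that $M=\mathcal{F}(V)$ has property $\mathcal{P}$ over $A$, completing the proof.
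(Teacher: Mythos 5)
Your proposal is correct and follows essentially the same route as the paper's own proof: cover an arbitrary affine open $V=\Spec A$ by finitely many basic opens $D(f_j)$ sitting inside the $U_i$, push property $\mathcal{P}$ forward along $\os_X(U_i)\to A_{f_j}$ via the first half of Lemma~\ref{prev_ML}, and then descend along the faithfully flat map $A\to\prod_{j=1}^m A_{f_j}$ using the converse half. If anything, you are more careful than the paper at two points it leaves implicit --- the existence of the basic opens $D(f)\subseteq V\cap U_i$ together with the quasi-coherent base-change identification $\mathcal{F}(D(f))\cong \os_X(D(f))\otimes_{B_i}\mathcal{F}(U_i)$, and the fact that over the product ring $S$ the property is detected componentwise via idempotents.
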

\begin{proof}
$(1)\Rightarrow (2)$ is immediate.

\noindent
$(2)\Rightarrow (1)$. Let $U\subseteq X$ be an arbitrary open affine. And write
$U=\bigcup_{j=1}^n D(f_j)$ where, for each $j$, $D(f_j)$ is a basic open of
some $U_i$, $i\in I$. Let us denote $M=\mathcal{F}(U)$ and
$R=\mathcal{O}_X(U)$. Therefore $\mathcal{O}_X(D(f_j))=R_{f_j}$. Then the
hypothesis and Lemma \ref{prev_ML} applied to the ring map
$\mathcal{O}_X(U_i)\to \mathcal{O}_X(D(f_j))$, give that
$\mathcal{F}(D(f_j))=R_{f_j}\otimes_R M=M_{f_j}$ is a projective (resp. flat
Mittag--Leffler or
almost projective) $R_{f_j}$-module. Let
$S=\prod_{j=1}^mR_{f_j}$. Now $R\to S$
is faithfully flat and $S\otimes_R M=\prod_{j=1}^n M_{f_j}$ is a projective
(resp. flat Mittag--Lefler and almost projective) $S$-module. Hence we infer,
again by Lemma \ref{prev_ML}, that
$M$ is projective (resp. flat Mittag-Leffler or almost projective) $R$-module.
\end{proof}

From now on we will assume that the scheme $X$ is quasi--compact and
semi--separated. Soon it will
become clear that the only requirement needed in our results is that $X$ can be
covered by at most countably many affine opens. But just for a sake of
simplicity the reader may assume that $X\subseteq \mathbb P_R^n$ is a closed
subscheme. We point out that we will write down explicitly when we need to
impose further assumptions on the scheme. 

By the previous comments, we will represent each vector bundle (resp.
locally flat Mittag--Leffler or locally almost projective)
$\mathcal P$ on $X$ by a quasi--coherent $\mathbf{R}$-module
$\mathbf P$ over some quiver $Q=(V,E)$ representing $X$, such that $ P(v)$ is a
projective (resp. flat Mittag--Leffler or
almost projective) $R(v)$-module for each
$v \in V$. Our first aim in this section is to prove Theorem
\ref{ap2} (Theorem A in the introduction). As a consequence of it we provide in
Corollary \ref{sonuc} with a version of Kaplansky Theorem for vector bundles in
the Drinfeld's sense.

The following  lemma and proposition, which are modified versions
of \citet[Lemma 3.2, Proposition 3.3]{relativehom}, have importance
in proving our main results.

\begin{lemma}\label{q.c}
Let $\mfR ' \equiv R(v) \to R(w)$ be a part of the ring
representation $\mfR$ of $X$
where $w \subseteq v $. Suppose that we have
a quasi-coherent $\mfR'$-module
$$ M(v)\stackrel{f} {\longrightarrow} M(w) $$
and   two countable subsets $X(v)$ and $ X(w) $ of $M(v)$ and
$M(w)$, respectively. Then there exists  a quasi-coherent $\mathbf R
'$-submodule
$$M'(v)  \longrightarrow M'(w) $$
of $ M(v)\stackrel{f} {\longrightarrow} M(w) $ such that $X(v)
\subseteq M'(v) \subseteq M(v)$, $X(w) \subseteq M'(w)  \subseteq
M(w)$ and $M'(v),M'(w)$ are countably generated modules over $R(v)$
and $R(w)$, respectively.
\end{lemma}
\begin{proof}

Let $t \in X(w)$. Then, because of the quasi-coherence, there exists
$Y_t=\{y_1,\cdots,y_{k_t}\}\subseteq M(v)$ such that
$$t=\sum_{i=1}^{k_t}r_i f(y_i),\ \ r_i\in R(w).$$
Take the submodule $M'(v)$ of $M(v)$ generated by $X(v) \cup Y$
where $Y$ consists of all of  $Y_t$  which has been found for each
$t \in X(w)$ as above . Since $|X(v) \cup Y|\leq \aleph _0 +
\aleph_0=\aleph _0$, $M'(v) $ is countably generated. Let $M'(w)$ be
the $R(w)$-submodule  of $M(w)$ generated by $f(M'(v))$. Clearly
$M'(w)$ is a countably generated submodule of $M(w)$ containing
$X(w)$. Let us see that the submodule
$$M'(v) \xrightarrow{f|_{M'(v)}} M'(w)$$
 is
quasi-coherent. Since the morphism $\varphi=s\circ (f\otimes_{R(v)}id_{R(w)})$
is an
isomorphism (where $s(r_w\otimes m_w)=r_wm_w$, $r_w\in R(w)$ and $m_w\in M(w)$),
we only need to show
that $\varphi(M'(v)\otimes_{R(v)} R(w))$ is  the
$R(w)$-module $M'(w)=R(w)f(M'(v))$. Indeed,
$\varphi (M'(v)\otimes_{R(v)} R(w))$ is equal to the
$R(w)$-module generated by $f(M'(v))$, that is, to the $R(w)$-module
$M'(w)$. This implies that $M'(v) \rightarrow M'(w)$ is a
quasi-coherent $\mfR'$-submodule of $M(v) \ra M(w)$.
\end{proof}
\begin{proposition}\label{anaprp}   
Let $\mathbf M$ be a quasi-coherent sheaf on 
 $X$. If $X(v) \subseteq  M(v)$ is a countable subset
for each $v
 \subseteq \{ 0,1, \ldots ,n \}$, then there exists a
quasi-coherent submodule $\mathbf M'$ of $\mathbf M$ such that $X(v)
\subseteq M'(v)  \subseteq M(v)$ and $M'(v) $ is a countably
generated $R(v)$-module for all $v \subseteq \{ 0,1, \ldots ,n \}$.
\end{proposition}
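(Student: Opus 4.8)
The plan is to globalize Lemma \ref{q.c} from a single edge to the whole (finite) quiver by an $\omega$-indexed back-and-forth construction. First I would set $M'_0(v)$ to be the $R(v)$-submodule of $M(v)$ generated by $X(v)$, which is countably generated. Then, inductively, given countably generated submodules $M'_k(v)\subseteq M(v)$ for all vertices $v$, I would fix a countable generating set of each $M'_k(v)$ and, for every edge $a:u\to u'$ of the quiver, apply Lemma \ref{q.c} to the quasi-coherent piece $M(u)\xrightarrow{f_a}M(u')$ together with these two countable subsets. This yields a countably generated quasi-coherent $\mathbf R'$-submodule $N_a(u)\to N_a(u')$ containing the chosen generators at both endpoints. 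I would then define $M'_{k+1}(v)$ to be the $R(v)$-submodule of $M(v)$ generated by $M'_k(v)$ together with all the $N_a(v)$ for which $v$ is an endpoint of $a$. Since the quiver has only finitely many vertices and edges, this is a finite union of countably generated modules, hence countably generated; finally I would put $M'(v)=\bigcup_{k<\omega}M'_k(v)$.

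Next I would check the required properties. That each $M'(v)$ is countably generated is clear, as it is an increasing $\omega$-union of countably generated submodules (the union of the countable generating sets is again countable). To see that the structure maps restrict, i.e. $f_a(M'(u))\subseteq M'(u')$ for each edge $a:u\to u'$, I would note that any $x\in M'(u)$ lies in some $M'_k(u)$; at stage $k+1$ the module $N_a(u)$ contains the chosen generators of $M'_k(u)$, hence $N_a(u)\supseteq M'_k(u)$, and by quasi-coherence of $N_a(u)\to N_a(u')$ one has $f_a(M'_k(u))\subseteq R(u')f_a(N_a(u))=N_a(u')\subseteq M'_{k+1}(u')$. Thus $\mathbf M'=(M'(v))$ is a genuine $\mathbf R$-submodule of $\mathbf M$ with $X(v)\subseteq M'(v)\subseteq M(v)$.

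It remains to verify quasi-coherence of $\mathbf M'$, namely that for each edge $a:u\to u'$ the induced map $\varphi\colon R(u')\otimes_{R(u)}M'(u)\to M'(u')$ is an isomorphism. For injectivity I would use that $R(u')$ is a localization of $R(u)$, hence flat: tensoring the inclusion $M'(u)\hookrightarrow M(u)$ with $R(u')$ stays monic, and composing with the isomorphism $R(u')\otimes_{R(u)}M(u)\xrightarrow{\sim}M(u')$ coming from the quasi-coherence of $\mathbf M$ shows that $\varphi$ is monic. For surjectivity, equivalently $M'(u')=R(u')f_a(M'(u))$, I would use the construction directly: every generator $y$ of $M'_k(u')$ is fed into Lemma \ref{q.c} at stage $k+1$, so $y\in N_a(u')=R(u')f_a(N_a(u))\subseteq R(u')f_a(M'_{k+1}(u))$; taking the union over $k$ gives $M'(u')\subseteq R(u')f_a(M'(u))$, while the reverse inclusion is immediate from the closure property above.

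I expect the main obstacle to be precisely this interleaving: a single application of Lemma \ref{q.c} enlarges both endpoints of an edge, and these enlargements must propagate along all the other edges, so one cannot simply close up one edge at a time. The $\omega$-iteration over the entire quiver resolves this, and the finiteness of the vertex and edge sets (the vertices being the subsets of $\{0,\dots,n\}$) is exactly what keeps every $M'_k(v)$, and hence $M'(v)$, countably generated. Note that the two halves of quasi-coherence are secured by quite different means: injectivity is automatic from the flatness of the localizations $R(u)\to R(u')$, whereas surjectivity must be engineered into the recursion by always including the current generators of the target vertex when invoking Lemma \ref{q.c}.
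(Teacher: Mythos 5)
Your proof is correct and follows essentially the same strategy as the paper's: an $\omega$-iterated application of Lemma \ref{q.c} over the finitely many edges of the (finite) quiver, followed by taking the union, with countable generation preserved precisely because the vertex and edge sets are finite. The only difference is organizational rather than mathematical: you enlarge along all edges simultaneously at each stage and verify quasi-coherence of the limit directly (injectivity from flatness of the maps $R(u)\to R(u')$, surjectivity engineered into the recursion), whereas the paper cycles through the edges round-robin, with an inner $\omega$-iteration restoring closure under the structure maps, so that the quasi-coherence condition on each edge holds cofinally in the chain and passes to the directed union.
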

\begin{proof}
Let $E=\{ e_i : 0 \leq l \leq k \}$ be the set of  all arrows
defining the quiver of $X$
for some natural number $k$. We will construct by induction a family
of $\mfR$-submodules $\mathbf M^{(m)}$ of $\mathbf M$ satisfying:
\begin{enumerate}
\item $X(v) \subseteq M^{(m)} (v) \subseteq M(v)$ is countably
generated for each $v$ and $m,l \in \N$.
\item \mbox{$M^{(m)} (v) \rightarrow
M^{(m)}(w)$} satisfies the quasi-coherent condition on the edge $l$, whenever 
$m\equiv l \pmod{(k+1)}$ for $m \in \N$ and $l \in E$,
\item $\mathbf M^{(m)} \subseteq \mathbf M^{(m+1)}$ for all $m \in \N$.
\end{enumerate}

When $m \geq n+1$, think of $e_m$ as $e_l$, where $m \equiv l
\pmod{(k+1)}$ and $l\in E$. Let us consider the edge $e_0:v
\rightarrow w$. By applying Lemma \ref{q.c} to this edge, we obtain
$T_0 ^{(0)} (v ) \rightarrow T_0 ^{(0)} (w)$ satisfying the
quasi-coherent condition. And say $T_0 ^{(0)} (u) := X(u)$ for all
$u \subseteq \{0,\ldots,n\}$ different from $v$ and $w$. Now from
$\mathbf T_0 ^{(0)}$, by taking $T_1 ^{(0)} (u)$ as the
$R(u)$-module generated by the sets $\{ T_0 ^{(0)} (u), f_{u',u}(T_0
^{(0)} )|f_{u',u}: M(u') \rightarrow M(u)\}$ where each morphism
$f_{u',u}$ denotes the morphism $\mathbf M(a)$ where  $a:u' \ra u$
$(u,u' \subseteq \{0,\ldots, n \})$, we obtain a locally countably
generated  $\mfR$-submodule $\mathbf T_1 ^{(0)}$. But it is possible
that we may have lost the quasi-coherent condition on $e_0 :v
\rightarrow w$. So, we again apply Lemma \ref{q.c}  to obtain
$\mathbf T_2 ^{(0)}$ such that $T_2 ^{(0)} (v ) \rightarrow T_2
^{(0)} (w)$ satisfies the quasi-coherent condition. And by the same
argument above, we can construct an $\mfR$-submodule $\mathbf T_3
^{^(0)}$. Continuing in this way, we obtain the family $\{ \mathbf
T_n^{(0)}\}_{n\in \N}$.

Define the first term $\mathbf M^{(0)}$ as the direct union of this
family on $n\in \N$. Now assume we have constructed $\mathbf
M^{(m)}$ for $m\in \N$. Let us define $\mathbf M ^{(m+1)}$. Take the
edge $e_{m+1} : v \rightarrow w$. We apply Lemma \ref{q.c} to
$M^{(m)}(v) \rightarrow M^{(m)}(w)$ to obtain $T_0 ^{(m+1)}(v)
\rightarrow T_0 ^{(m+1)}(w)$ which satisfies the quasi-coherent
condition. Define $T_0 ^{(m+1)}(u):=M^{(m)}(u)$ for every $u \neq
v,w$. From this, we can construct  an $\mfR$-submodule $\mathbf
T_1^{(m+1)}$ of $\mathbf M$ by the same method we did before. Again
applying Lemma \ref{q.c} to $T_1 ^{(m+1)}(v) \rightarrow T_1
^{(m+1)}(w)$, we find $\mathbf T_2^{(m+1)}$ such that $T_2
^{(m+1)}(v) \rightarrow T_2 ^{(m+1)}(w)$ is quasi-coherent. By
proceeding in the same way, we obtain the family $\{ \mathbf T_n
^{(m+1)}\}_{n \in \N} $. So, define $\mathbf M^{(m+1)} := \bigcup
_{n \geq 0} \mathbf T_n ^{(m+1)}$. So we have constructed
inductively  the desired family $\{\mathbf M^{(m)}\}_{m \in
\mathbb{N}}$.

Finally, if we let $M'(v):= \bigcup _{m \in \mathbb{N}} M^{(m)}(v)$
for all $v \subseteq \{0,1,2, \ldots ,n \}$, we see that the
properties of being an  $\mfR$-module and the quasi-coherence
condition on each edge are cofinal. So, it follows that $\mathbf M'$
is a quasi-coherent $\mfR$-submodule of $\mathbf M$ containing
$X(v)$ for all $v \subseteq \{0,1,2, \ldots ,n \}$. Clearly, $\mathbf M'$ is
locally countably generated, since
$$|M'(v)|=|\bigcup _{n \in \mathbb{N}} M^{(m)}(v)|$$
for each $v$ and countable union of countable sets is again
countable.
\end{proof}
For the proof of the next Theorem, we need to fix the following notation:
Let $\mathcal{S}_v$ be the class of all countably generated almost
projective $R(v)$-modules for each $v \subseteq \{0,\ldots,n\}$,
$\mathcal{L}$ be the class of all locally countably generated almost
projective quasi-coherent $\mfR$-modules on
$Q_X$ and $\mathcal{C}$ be the class of all
locally almost projective quasi-coherent $\mfR$-modules. Then the
class $\mathcal L$ contains quasi-coherent $\mfR$-modules $\mathbf
M$ such that $M(v) \in \mathcal S_v$ for each edge $v \subseteq
\{0,\ldots,n\}$.

\begin{theorem}\label{ap2}
Every locally almost projective quasi-coherent $\mfR$-module is filtered by
locally countably generated almost projective quasi--coherent $\mfR$-modules.
\end{theorem}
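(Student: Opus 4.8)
The plan is to reduce everything to the finitely many vertices of the quiver $Q_X$ and then to build, by transfinite recursion, a continuous chain $(\mathbf M_\alpha : \alpha\le\lambda)$ of quasi--coherent $\mfR$--submodules of $\mathbf M$ whose successive quotients lie in the class $\mathcal L$. Since $\mathbf M$ is locally almost projective, each $M(v)$ is an almost projective $R(v)$--module, so by Proposition \ref{ap} it admits a decomposition $M(v)=\bigoplus_{i\in I_v}S^v_i$ into countably generated almost projective modules $S^v_i\in\mathcal S_v$. Reading this decomposition as an $\mathcal S_v$--filtration of $M(v)$ and applying the Hill Lemma \ref{hill} at each vertex (equivalently, using directly the lattice of sub--sums of $\bigoplus_i S^v_i$), I obtain for every $v$ a complete sublattice $\mathcal H_v$ of the lattice of submodules of $M(v)$ satisfying conditions (1)--(4). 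The decisive point is that, the filtration being a genuine direct--sum decomposition, every member of $\mathcal H_v$ is a direct summand of $M(v)$, so by condition (3) whenever $N\subseteq P$ in $\mathcal H_v$ the quotient $P/N$ is again (isomorphic to) a direct summand of $M(v)$; hence if $P/N$ is countably generated it is a countably generated direct summand of an almost projective module, and therefore lies in $\mathcal S_v$.

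I would then construct the chain by recursion. Fix a well--ordering of the disjoint union $\coprod_v M(v)$. Suppose $\mathbf M_\alpha$ has been defined with $M_\alpha(v)\in\mathcal H_v$ for every $v$. If $\mathbf M_\alpha=\mathbf M$ I stop; otherwise I pick the least element $x$ not yet in $\mathbf M_\alpha$, say $x\in M(v_0)$, and enlarge $M_\alpha(v_0)$ inside $\mathcal H_{v_0}$ so as to absorb $x$ with countably generated quotient (condition (4); here one uses that every element of a direct sum has finite support, so any countable set is caught inside a countable sub--sum, which is what forces the quotient to be countably \emph{generated} without any presentability hypothesis). This destroys quasi--coherence along the edges, so I restore it exactly as in Lemma \ref{q.c} and Proposition \ref{anaprp}: alternately closing up under the finitely many structural maps $\mathbf M(a)$ and re--applying the countable enlargement at each vertex, each time staying inside $\mathcal H_v$. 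Because the quiver has finitely many vertices and edges and each micro--step adds only countably many generators, the process stabilises after $\omega$ steps; taking the union (condition (2), closure under sums) yields $\mathbf M_{\alpha+1}$ with $M_{\alpha+1}(v)\in\mathcal H_v$, $\mathbf M_\alpha\subseteq\mathbf M_{\alpha+1}$, $\mathbf M_{\alpha+1}$ quasi--coherent, and each $M_{\alpha+1}(v)/M_\alpha(v)$ countably generated. At limit ordinals I set $\mathbf M_\lambda=\bigcup_{\alpha<\lambda}\mathbf M_\alpha$; continuity and quasi--coherence pass to directed unions, and each $M_\lambda(v)\in\mathcal H_v$ by closure under sums. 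The ``least missing element'' rule guarantees $\mathbf M=\bigcup_\alpha\mathbf M_\alpha$.

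It remains to check that the quotients belong to $\mathcal L$. Each $M_{\alpha+1}(v)/M_\alpha(v)$ is countably generated by construction; since $M_\alpha(v)\subseteq M_{\alpha+1}(v)$ are comparable members of $\mathcal H_v$, the summand remark of the first paragraph shows it is a countably generated direct summand of $M(v)$, hence lies in $\mathcal S_v$, i.e.\ is countably generated almost projective. Finally, because each edge map $R(v)\to R(w)$ of $\mfR$ is flat (it is a localisation), tensoring the short exact sequence $0\to M_\alpha(v)\to M_{\alpha+1}(v)\to M_{\alpha+1}(v)/M_\alpha(v)\to 0$ with $R(w)$ stays exact, and comparing with the quasi--coherence isomorphisms for $\mathbf M_\alpha$ and $\mathbf M_{\alpha+1}$ shows that $\mathbf M_{\alpha+1}/\mathbf M_\alpha$ is quasi--coherent. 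Thus it is a locally countably generated almost projective quasi--coherent $\mfR$--module, that is, an object of $\mathcal L$, and $(\mathbf M_\alpha:\alpha\le\lambda)$ is the desired $\mathcal L$--filtration.

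The main obstacle is the successor step: one must maintain simultaneously, throughout the $\omega$--long closing--up, membership of every $M_{\alpha+1}(v)$ in its lattice $\mathcal H_v$, genuine quasi--coherence of $\mathbf M_{\alpha+1}$, and countable generation of all vertex quotients. The subtlest part is guaranteeing that the vertex quotients are actually \emph{almost projective} rather than merely $\mathcal S_v$--filtered --- a direct sum of countably many countably generated almost projective modules need not be almost projective --- which is exactly why it is essential that the Hill family of a direct--sum decomposition consists of direct summands, so that comparable quotients are again summands of the almost projective module $M(v)$.
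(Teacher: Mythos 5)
Your proof is correct and follows essentially the same route as the paper's: Proposition \ref{ap} at each vertex, the Hill family $\mathcal{H}_v$ of the resulting direct--sum filtration, and a transfinite recursion whose successor step alternates $\omega$ many times between Hill-type vertex enlargements and the quasi--coherent closing-up of Lemma \ref{q.c} and Proposition \ref{anaprp}, taking unions at limits. Your explicit observation that, for a direct--sum filtration, comparable members of $\mathcal{H}_v$ have quotients which are direct summands of $M(v)$ (hence genuinely almost projective rather than merely $\mathcal{S}_v$-filtered, and countably generated without any presentability hypothesis) is precisely the point the paper leaves implicit in its terse appeal to Lemma \ref{hill}.
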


\begin{proof}
Let $\mathbf T$ be a quasi-coherent $\mfR$-module belonging to the
class $\mathcal C$.  By Proposition \ref{ap}, we know that each
$T(v)$ has an $\mathcal{S}_v$-filtration $\mathcal{M}_v$ for all $v
\subseteq \{0,1,2, \ldots ,n \}$. Let $\mathcal{H}_v$ be  the family
associated to $\mathcal{M}_v$ by Lemma \ref{hill} and
$\{m_{v,\alpha} | \alpha < \tau _v \}$ be an $R(v)$-generating set
of the $R(v)$-module $M(v)$. Without lost of generality, we can
assume that for some ordinal $\tau$, $\tau = \tau _v$ for all $v$.

We will construct an $\mathcal{L}$-filtration $(\mathbf M_\alpha |\
\alpha \leq \tau)$ for $\mathbf T$ by induction on $\alpha$. Let
$\mathbf M_0=0$. Assume that $\mathbf M_\alpha$ is defined for some
$\alpha < \tau$ such  that $M_\alpha(v) \in \mathcal{H}_v$ and
$m_{v,\beta} \in M(v)$ for all $\beta < \alpha$ and all $v \subseteq
\{0,1,2, \ldots ,n \}$. Set  $N_{v,0}= M_\alpha (v)$. By
Lemma \ref{hill}(iv), there is a module $N_{v,1} \in \mathcal{H}_v$ such that
$N_{v,0} \subseteq N_{v,1}$ and $N_{v,1} / N_{v,0}$ is countably
generated.

By Proposition \ref{anaprp} (with $\mathbf M$ replaced by $\mathbf T
/ \mathbf M_\alpha $, and $X(v)=N_{v,1}/M_\alpha(v)$) there is a
quasi-coherent $\mathbf R$-submodule $\mathbf T_1 $ of $\mathbf T$
such that $\mathbf M_\alpha \subseteq \mathbf T_1 $ and $\mathbf T_1
/ \mathbf M_\alpha$ is locally countably generated. Then $T_1 (v) =
N_{v,1}+ \langle T_v \rangle$ for a countably subset  $T_v \subseteq
T_1  (v)$, for each $v$. Again by help of Lemma \ref{hill}(iv), there is
a module $N_{v,2} \in \mathcal{H}_v$ such that $T_1  (v)= N_{v,1} +
\langle T_v \rangle \subseteq N_{v,2}$ and $N_{v,2} / N_{v,1}$ is
countably generated.

Proceeding similarly, we obtain a countable chain $(\mathbf T_n  |\
n < \aleph_0)$ of quasi-coherent $\mathbf R$-submodule of $\mathbf
T$, as well as a countable chain $(N_{v,n} | n < \aleph_0)$ of
$R(v)$-submodules of $T(v)$, for each $v$. Let $\mathbf
M_{\alpha+1}= \bigcup _{n < \aleph_0} \mathbf T_n $. Then  $\mathbf
M_{\alpha+1}$ is a quasi-coherent subsheaf of $\mathbf T$ satisfying
$M_{\alpha+1}(v)= \bigcup _{n < \aleph_0} T_n '(v)$ for each $v$. By
Lemma \ref{hill}(ii), we deduce that $M_{\alpha+1}(v) \in \mathcal{H}_v$
and $M_{\alpha+1}(v) / M_\alpha (v)$ is a countably generated almost
projective $R(v)$-module. Therefore $\mathbf M_{\alpha+1} / \mathbf
M_\alpha \in \mathcal{L}$.

Assume $\mathbf M_\beta$ has been defined for all  $\beta < \alpha$
where $\alpha$ is a limit ordinal $\leq \tau$. Then  we define
$\mathbf M_\alpha := \bigcup_{\beta < \alpha} \mathbf M_\beta$.

Since $m_{v,\alpha} \in M_{\alpha+1}(v)$ for all $v$ and $\alpha <
\tau$, we have  $M_\tau(v)=M(v)$. So $(\mathbf M_\alpha |\ \alpha
\leq \tau)$ is an $\mathcal{L}$-filtration of $\mathbf T$.
\end{proof}
Now, as an application of Theorem \ref{ap2}, we can get a version of Kaplansky
Theorem for quasi--coherent sheaves on $X$ (cf. \cite[Corollary
3.12]{modelcategory}).
To do this we just have to restrict $\mathcal S_v$ to the class of all
countably generated
projective $R(v)$-modules for each $v \subseteq \{0,\ldots,n\}$ in the proof of
Theorem \ref{ap2}, and then using Kaplansky Theorem instead of Proposition
\ref{ap}. Notice that in this case,
$\mathcal L$ will be the class of all locally countably generated vector
bundles on $X$ and $\mathcal
C$ be the class of all vector bundles.

\begin{corollary}\label{sonuc}
Every vector bundle on $X$ is a filtration of
locally countably generated vector bundles.
\end{corollary}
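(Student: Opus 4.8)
The plan is to repeat the proof of Theorem \ref{ap2} almost verbatim, specializing the auxiliary classes. Concretely, I would redefine $\mathcal{S}_v$ to be the class of all countably generated \emph{projective} $R(v)$-modules, so that $\mathcal{L}$ becomes the class of all locally countably generated vector bundles and $\mathcal{C}$ the class of all vector bundles. The only place where the nature of the modules enters is the starting filtration of each section module, and there I would invoke Kaplansky's Theorem in place of Proposition \ref{ap}: since $\mathbf{P}$ is a vector bundle, each $P(v)$ is projective, hence by Kaplansky it is a direct sum of countably generated projective submodules, which in particular yields an $\mathcal{S}_v$-filtration $\mathcal{M}_v$ of $P(v)$.

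With these filtrations in hand I would apply the Hill Lemma (Lemma \ref{hill}) at each $v$ to produce the families $\mathcal{H}_v$; the hypothesis is met because countably generated projective modules are countably presented, hence $<\aleph_1$-presented. The transfinite construction of the $\mathcal{L}$-filtration $(\mathbf{M}_\alpha \mid \alpha \leq \tau)$ is then identical to that of Theorem \ref{ap2}: at each successor stage I alternate between Lemma \ref{hill}(iv), which enlarges $M_\alpha(v)$ inside $\mathcal{H}_v$ by a countably generated quotient, and Proposition \ref{anaprp}, which restores the quasi-coherence condition on every edge while keeping everything locally countably generated; iterating this back-and-forth $\aleph_0$ many times and taking the union yields $\mathbf{M}_{\alpha+1}$. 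At limit stages one simply takes unions, and the bookkeeping $m_{v,\alpha}\in M_{\alpha+1}(v)$ guarantees $M_\tau(v)=P(v)$.

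The one point that must be checked in order to upgrade \textquotedblleft almost projective\textquotedblright\ to \textquotedblleft vector bundle\textquotedblright\ is that each successive quotient $M_{\alpha+1}(v)/M_\alpha(v)$ is genuinely projective, not merely countably generated. This is exactly what Lemma \ref{hill}(iii) delivers: since both $M_\alpha(v)$ and $M_{\alpha+1}(v)$ lie in $\mathcal{H}_v$, the quotient admits an $\mathcal{S}_v$-filtration whose factors are isomorphic to factors $M_{\beta+1}/M_\beta$ of the original Kaplansky filtration, and those factors are countably generated projective by construction. Hence $M_{\alpha+1}(v)/M_\alpha(v)$ is countably generated projective for every $v$, so $\mathbf{M}_{\alpha+1}/\mathbf{M}_\alpha$ is a locally countably generated vector bundle, i.e. lies in $\mathcal{L}$.

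I do not expect a genuine obstacle, since the scheme-theoretic and quasi-coherence machinery is already in place in Lemma \ref{q.c}, Proposition \ref{anaprp}, and Theorem \ref{ap2}. The only substantive difference from Theorem \ref{ap2} is the replacement of Proposition \ref{ap} by Kaplansky's Theorem as the source of the local filtrations, together with the observation above that Lemma \ref{hill}(iii) transports projectivity of the original filtration factors to projectivity of the new ones; this transport of projectivity is the single delicate step, and it is handled entirely by the Hill Lemma.
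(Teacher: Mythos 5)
Your proposal is correct and is essentially the paper's own proof: the paper establishes Corollary \ref{sonuc} exactly by restricting $\mathcal{S}_v$ to the class of countably generated projective $R(v)$-modules in the proof of Theorem \ref{ap2} and invoking Kaplansky's Theorem in place of Proposition \ref{ap}. Your additional observation that Lemma \ref{hill}(iii) transports projectivity to the successive quotients (a module filtered by projectives being projective) makes explicit a point the paper leaves implicit, but the route is the same.
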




Now we prove that locally flat Mittag-Leffler quasi-coherent sheaves are
direct unions of locally countably genererated vector bundles.
\begin{theorem}\label{mlqc}
Every locally flat Mittag-Leffler quasi-coherent sheaf on $X$ is a direct union
of locally countably generated vector bundles.
\end{theorem}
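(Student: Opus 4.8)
The plan is to globalize, over the quiver $Q_X$, the module-theoretic fact recorded just after Theorem \ref{ML}: a flat Mittag-Leffler module is a direct union of its countably generated projective submodules. Concretely, I would first prove a local existence lemma: given our quasi-coherent $\mfR$-module $\mathbf M$ with each $M(v)$ flat Mittag-Leffler, and given countable subsets $X(v)\subseteq M(v)$, there is a quasi-coherent submodule $\mathbf N\subseteq \mathbf M$ with $X(v)\subseteq N(v)$ such that each $N(v)$ is countably generated and projective, i.e. $\mathbf N$ is a locally countably generated vector bundle. Granting this lemma, Theorem \ref{mlqc} follows formally: the set $\mathcal D$ of all locally countably generated vector bundle subsheaves of $\mathbf M$ is directed, since given $\mathbf N_1,\mathbf N_2\in\mathcal D$ one applies the lemma to countable generating sets of the $N_1(v)+N_2(v)$ to obtain a common upper bound; and $\bigcup_{\mathbf N\in\mathcal D}\mathbf N=\mathbf M$, since applying the lemma to a single element $m\in M(v)$ captures it. Hence $\mathbf M=\varinjlim_{\mathbf N\in\mathcal D}\mathbf N$ is a direct union of locally countably generated vector bundles.

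For the local lemma I would build, by interleaving two operations, an increasing countable chain
$$N^{(0)}(v)\subseteq P^{(0)}(v)\subseteq N^{(1)}(v)\subseteq P^{(1)}(v)\subseteq\cdots\subseteq M(v)$$
at every vertex $v$ simultaneously. The first operation restores quasi-coherence: starting from $X(v)$, Proposition \ref{anaprp} produces a quasi-coherent, locally countably generated submodule $\mathbf N^{(0)}$ with $X(v)\subseteq N^{(0)}(v)$. The second operation restores local projectivity: by the Raynaud-Gruson characterization Theorem \ref{ML}(2), I enlarge each countably generated $N^{(m)}(v)$ to a countably generated projective submodule $P^{(m)}(v)\supseteq N^{(m)}(v)$ with $M(v)/P^{(m)}(v)$ flat. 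Since this per-vertex enlargement may destroy quasi-coherence, I reapply Proposition \ref{anaprp} to the generators of the $P^{(m)}(v)$ to obtain the next quasi-coherent stage $\mathbf N^{(m+1)}$, and iterate. Setting $N(v):=\bigcup_m N^{(m)}(v)=\bigcup_m P^{(m)}(v)$ yields a locally countably generated submodule $\mathbf N$, and $\mathbf N$ is quasi-coherent because localization commutes with the directed colimit and the quasi-coherence isomorphisms already hold at the cofinal stages $\mathbf N^{(m)}$ (this is exactly the cofinality argument used in the proof of Proposition \ref{anaprp}).

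The crux, and where I expect the main difficulty, is verifying that each $N(v)$ is genuinely projective. Here I would argue as follows. The quotient $M(v)/N(v)=\varinjlim_m M(v)/P^{(m)}(v)$ is a directed colimit of flat modules, hence flat; therefore $N(v)$ is a pure submodule of $M(v)$. Since pure submodules of Mittag-Leffler modules are again Mittag-Leffler and pure submodules of flat modules are flat, $N(v)$ is a countably generated flat Mittag-Leffler module, and such a module is projective by Raynaud-Gruson \cite{RG} (cf. Theorem \ref{ML}). Thus $\mathbf N$ is a locally countably generated vector bundle, which completes the lemma and hence the theorem. The two delicate points to handle with care are the interaction between the per-vertex projectivization and the global quasi-coherence condition, which the interleaving construction and the cofinality of quasi-coherence under the union are designed to manage, and the two Raynaud-Gruson facts that flat Mittag-Leffler modules are closed under pure submodules and that a countably generated flat Mittag-Leffler module is projective.
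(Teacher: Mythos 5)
Your proposal is correct and takes essentially the same route as the paper: the paper's proof constructs each subsheaf $\mathbf M_i$ by exactly your interleaving of Proposition \ref{anaprp} (restoring quasi-coherence) with the Raynaud--Gruson enlargement of Theorem \ref{ML} (restoring countably generated projectivity with flat quotient), starting from the decompositions $M(v)=\bigcup_{i\in I}P^i_v$ and taking the countable union of the resulting chain. If anything you are more careful than the paper, which leaves implicit both the directedness of the family of subsheaves and the purity argument (flat quotient $\Rightarrow$ pure submodule $\Rightarrow$ flat Mittag--Leffler, then countably generated flat Mittag--Leffler $\Rightarrow$ projective) showing that the union $\bigcup_{n}T^{2n}(v)$ is genuinely projective; your write-up supplies precisely these two verifications.
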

\begin{proof}
Let $\mathbf M$ be a locally flat Mittag-Leffler quasi-coherent
$\mfR$-module. By Theorem \ref{ML}, each
$R(v)$-module $M(v)$ is a union of countably generated projective
submodules $v \subseteq \{0,1,2, \ldots ,n \}$, that is,
$M(v)=\bigcup_{i \in I_v} P^i_v$. W.l.o.g., wee can assume that
$I=I_v$ for each $v \subseteq \{0,1,\ldots ,n\}$.

Let $i\in I$. Set $T^0(v) := P_v^i$. By Proposition \ref{anaprp},
there is a quasi-coherent $\mathbf R$-submodule $\mathbf T^1 $ of
$\mathbf M$ such that $T^0(v) \subseteq  T^1(v) $ and $\mathbf T^1$
is locally countably generated. By Theorem \ref{ML}, there is a
countably generated projective submodule $T^2(v)$ of $M(v)$
containing $T^1(v)$, for each $v$. Again, by Proposition
\ref{anaprp}, there is a quasi-coherent $\mathbf R$-submodule
$\mathbf T^3 $ of $\mathbf M$ such that $T^2(v) \subseteq  T^3(v) $
and $\mathbf T^3$ is locally countably generated. Continuing in the
same way, we obtain a countable chain $(\mathbf T^{n+1})_{n\in \N}$
of locally countably generated quasi-coherent subsheaves of $\mathbf
M$ as well as a countable chain $(T^{2n}(v))_{n\in \N}$ of
$R(v)$-submodules of $M(v)$ contained in $T^{n+1}(v)$, for each $v$.
Let $\mathbf M_i := \bigcup_{n\in \N} \mathbf{T^{n+1}}$. Then
$\mathbf M_i$ is a locally countably generated vector bundles since
$M_i(v)=\bigcup_{n\in \N} T^{2n}(v)$ for each $v$.

Finally, $\mathbf M= \bigcup_{i\in I} \mathbf M_i$ since each
$M_i(v)$ contains $P_v^i$ for all $i\in I$ and $v \subseteq
\{0,1,\ldots ,n\}$.

\end{proof}

Now, let $F$ be a flat $R$-module. Then there exists a short exact
sequence
$$0\rightarrow M \hookrightarrow \bigoplus_J R \rightarrow F
\rightarrow 0.$$ Since $F$ is flat, $M$ is a pure submodule of
$\bigoplus_J R$. Since pure submodules of flat Mittag--Leffler are flat
Mittag--Leffler, we follow that $M$ is flat Mittag--Leffler. Therefore
$M= \bigcup_{i\in I}M_i$ where $M_i$ is a countably generated
projective submodule of $M$ for each $i\in I$. For $J'$ a countable
subset of $J$, set $M_{i,J'}:=M_i$. Then we may find some countable
subset $J''$ of $J$ containing $J'$ and $M_{i,J'}$ is submodule of
$\bigoplus_{J''}R$. If we denote $R_{i,J'}:= \bigoplus_{J''}R$, we get
a commutative diagram
$$\xymatrix{0 \ar[r]& M_{i,J'} \ar@{^{(}->}[r]\ar@{^{(}->}[d] &
R_{i,J'}\ar@{^{(}->}[d] \ar[r] & F_{i,J'}\ar[r]\ar[d]& 0\\
0\ar[r]& M\ar@{^{(}->}[r]&\bigoplus_J R\ar[r] & F \ar[r] & 0 }.$$
Here, $F_{i,J'}$ is countably generated and flat since $M_{i,J'}$ is
pure in  $R_{i,J'}$. If we take their direct limits over $I$ and
countable subsets ${J'}$ of $J$, we get $F=\varinjlim F_{i,J'}$.

\bigskip\par
Now we shall prove the main result of our paper. To do so, we will need to
assume that our scheme $X$ possesses a family of locally countably
generated vector bundles. This is the case whenever $X$
satisfies the resolution property (that is, every coherent sheaf is a quotient
of some finite dimensional vector bundle) because, in that situation, every
quasi--coherent sheaf on $X$ is the filtered union of coherent subsheafs. So the
vector bundles constitute a family of generators of $\qco(X)$. We find examples
of such schemes whenever $X$ is noetherian, separated, integral and locally
factorial by a result of Kleiman (see \cite[Ex. III.6.8]{hartshorne}).

Let us denote by $\mathcal V$ the class of all vector bundles on $X$.
Given a ${\mathbf M}\in \mathbf{R}_{Qco}\textrm{-Mod}$ we say that
${\mathcal V}\mathrm{dim}\,{\mathbf M}\leq n$ if there exists an exact sequence
in
$\mathbf{R}_{Qco}\textrm{-Mod}$,
$$0\to {\mathbf P}_n\to {\mathbf P}_{n-1}\to\ldots\to {\mathbf P}_0\to {\mathbf
M}_0\to 0, $$such that ${\mathbf P}_i\in {\mathcal V}$, for all $i=0,\ldots,n$.
\begin{theorem}\label{maint}
Let $X$ be a scheme having enough locally countably generated
vector bundles.
Let $\mathbf{F}$ be a flat quasi-coherent sheaf on $X$. Then
$\mathbf{F}=\varinjlim \mathbf{F}_i$, where
$\mathbf{F}_i$ is locally countably generated and flat with ${\mathcal
V}\mathrm{dim}\,{\mathbf F}_i\leq 1$.
\end{theorem}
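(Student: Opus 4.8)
The plan is to carry out, inside the Grothendieck category $\mathbf{R}_{Qco}$-Mod, the same construction performed just above for a flat $R$-module, using Theorem \ref{mlqc} as the global substitute for the Raynaud--Gruson decomposition. Since $X$ has enough locally countably generated vector bundles, these form a generating family of $\qco(X)$, so I can choose an epimorphism $\pi\colon \mathbf{P}\to\mathbf{F}$ with $\mathbf{P}=\bigoplus_{j\in J}\mathbf{G}_j$ a coproduct of locally countably generated vector bundles. As coproducts in $\mathbf{R}_{Qco}$-Mod are computed vertexwise and a coproduct of projective modules is projective, $\mathbf{P}$ is a vector bundle (though $\mathbf{P}$ itself is \emph{not} locally countably generated; only its countable subcoproducts are). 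Put $\mathbf{K}=\ker\pi$, giving $0\to\mathbf{K}\to\mathbf{P}\to\mathbf{F}\to 0$. Because $X$ is semi--separated, every vertex of $Q_X$ is affine and $\Gamma$ is exact there, so this sequence is exact vertexwise: for each $v$ we get $0\to K(v)\to P(v)\to F(v)\to 0$ with $P(v)$ projective and $F(v)$ flat. Hence $K(v)$ is pure in $P(v)$, and since a pure submodule of a flat Mittag--Leffler module is flat Mittag--Leffler, $\mathbf{K}$ is locally flat Mittag--Leffler.

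Next I apply Theorem \ref{mlqc} to write $\mathbf{K}=\bigcup_{i\in I}\mathbf{K}_i$ as a directed union of locally countably generated vector bundles. I will in fact need a purity refinement of that decomposition: running the proof of Theorem \ref{mlqc} and selecting, via Theorem \ref{ML}(2), the intermediate countably generated projective submodules of $K(v)$ so that their quotient in $K(v)$ is \emph{flat}, each $K_i(v)$ becomes a directed union of pure submodules of $K(v)$, hence pure in $K(v)$. Since each $\mathbf{K}_i$ is locally countably generated and $Q_X$ has only finitely many vertices, there is a countable $J_i\subseteq J$ with $\mathbf{K}_i\subseteq \mathbf{P}_{J_i}:=\bigoplus_{j\in J_i}\mathbf{G}_j$. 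For countable $J'\supseteq J_i$ the subcoproduct $\mathbf{P}_{J'}$ is a locally countably generated vector bundle, and I set $\mathbf{F}_{i,J'}:=\mathbf{P}_{J'}/\mathbf{K}_i$, sitting in $0\to\mathbf{K}_i\to\mathbf{P}_{J'}\to\mathbf{F}_{i,J'}\to 0$. As $\mathbf{K}_i$ and $\mathbf{P}_{J'}$ are vector bundles this gives ${\mathcal V}\mathrm{dim}\,\mathbf{F}_{i,J'}\leq 1$, and $\mathbf{F}_{i,J'}$ is locally countably generated as a quotient of $\mathbf{P}_{J'}$.

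The crux is the flatness of $\mathbf{F}_{i,J'}$, that is, that $F_{i,J'}(v)=P_{J'}(v)/K_i(v)$ is flat for each $v$; this is not automatic, since a quotient of a projective by a projective submodule need not be flat. Here the purity bookkeeping pays off: $K_i(v)$ is pure in $K(v)$, $K(v)$ is pure in $P(v)$, so $K_i(v)$ is pure in $P(v)$; and $P_{J'}(v)$ is a direct summand of $P(v)$ containing $K_i(v)$, so a purity holding in $P(v)$ descends to the summand $P_{J'}(v)$. Thus $K_i(v)$ is pure in $P_{J'}(v)$ and $F_{i,J'}(v)$ is flat, whence $\mathbf{F}_{i,J'}$ is locally countably generated and flat with ${\mathcal V}\mathrm{dim}\,\mathbf{F}_{i,J'}\leq 1$, exactly as required.

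Finally I organise the $\mathbf{F}_{i,J'}$ into a directed system indexed by $\Lambda=\{(i,J'): i\in I,\ J'\subseteq J \text{ countable},\ J_i\subseteq J'\}$ ordered componentwise; $\Lambda$ is directed because $I$ is (the union is directed) and countable unions of countable sets are countable, and the inclusions $\mathbf{K}_i\subseteq\mathbf{K}_{i'}$, $\mathbf{P}_{J'}\subseteq\mathbf{P}_{J''}$ induce the transition maps. Since filtered colimits are exact in $\mathbf{R}_{Qco}$-Mod (hence commute with the formation of cokernels), $\varinjlim_\Lambda\mathbf{F}_{i,J'}=\Coker\big(\varinjlim_\Lambda\mathbf{K}_i\to\varinjlim_\Lambda\mathbf{P}_{J'}\big)$. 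The projections $\Lambda\to I$ and $\Lambda\to\{\text{countable }J'\}$ are cofinal, so these inner colimits recover the directed unions $\mathbf{K}$ and $\mathbf{P}$, giving $\varinjlim_\Lambda\mathbf{F}_{i,J'}=\Coker(\mathbf{K}\to\mathbf{P})=\mathbf{F}$. I expect the main obstacle to be precisely the purity refinement of Theorem \ref{mlqc} together with the purity chase of the third paragraph; once flatness of the $\mathbf{F}_{i,J'}$ is secured, the remaining verifications are formal.
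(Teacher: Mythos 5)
Your proposal is correct and takes essentially the same route as the paper's own proof: an epimorphism onto $\mathbf{F}$ from a coproduct of locally countably generated vector bundles, the kernel locally flat Mittag--Leffler by purity, Theorem \ref{mlqc} to write it as a directed union of locally countably generated vector bundles, and the countable-subcoproduct diagram whose exact direct limit recovers $\mathbf{F}$, with ${\mathcal V}\mathrm{dim}\leq 1$ read off from the resulting short exact sequences. Your purity refinement of Theorem \ref{mlqc} (choosing the intermediate projective submodules via Theorem \ref{ML} so the quotients are flat, making each $K_i(v)$ pure) is precisely the detail the paper leaves implicit when it says ``by the same argument above,'' referring to the module-level purity argument preceding the theorem.
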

\begin{proof}
Given a flat quasi-coherent sheaf
$\mathbf{F}$ we can find a short exact sequence
$$0\rightarrow \mathbf{M} \hookrightarrow \bigoplus_{j\in J}
\mathbf{P}_j \rightarrow \mathbf{F}\rightarrow 0,$$ where
$\mathbf{M}$ is locally flat Mittag-Leffler. By Theorem \ref{mlqc},
$\mathbf{M}=\bigcup_{i\in I} \mathbf{M}_i$, $\mathbf{M}_i$ is a
locally countably generated vector bundle for each $i\in I$. By the
same argument above, we are able to complete commutatively the following diagram
$$\xymatrix{0 \ar[r]& \mathbf{M}_i \ar@{^{(}->}[r]\ar@{^{(}->}[d] &
\bigoplus_{j\in J'}\mathbf{P}_j\ar@{^{(}->}[d] \ar[r] &
\mathbf{F}_{i,J'}\ar[r]\ar[d]& 0\\   
0\ar[r]&\mathbf{M}\ar@{^{(}->}[r]&\bigoplus_{j\in J}
\mathbf{P}_j\ar[r] & \mathbf{F} \ar[r] & 0 },$$ where
$J'\subseteq J$ is such that $\bigoplus_{j\in J'}\mathbf{P}_j$ is
locally countably generated and $\varinjlim \bigoplus_{j\in
J'}\mathbf{P}_j= \bigoplus_{j\in J}\mathbf{P}_j$ and $\mathbf
{F}_{i,J'}$ is locally countably generated and flat for each
$(i,J')$. Since $\mathbf{R}_{Qco}\textrm{-Mod}$ is a Grothendieck category, 
direct limits are exact, therefore $\varinjlim
\mathbf{F_{i,J'}} = \mathbf{F}$. Finally, since both $\mathbf{M}_i$ and
$\bigoplus_{j\in J'}\mathbf{P}_j$ are locally countably generated vector
bundles, it follows that $\mathcal{V}\mathrm{dim}\mathbf{F}_{i,J'}\leq 1$.

\end{proof}
In case $R$ is commutative noetherian and $X\subseteq \mathbb{P}_R^n$ is a
closed subscheme, we can replace in Theorem \ref{maint} the dimension with
respect to the class $\mathcal{V}$ by the projective dimension.
Recall that,
given a quasi--coherent sheaf $\mathbf{M}$, we say
that $\mathrm{projdim}\,\mathbf{M}\leq n$ if
$\mathrm{Ext}^i(\mathbf{M},-)=0$ for $i\geq n+1$. Then $\qco(X)$ has a family
of generators of projective dimension $\leq n$ (see \cite[pg. 538]{EnEsGa}). 
The generators are provided from the family of $\os(k)$,
$k\in\mathbb{Z}$, for ${\mathbb{P}^n}(R)$. These give the family
$\{i^*(\os(k)):\ k\in \mathbb{Z}\}$, where $i:X\hookrightarrow
{\mathbb{P}^n}(A)$ (see \cite[p.\ 120]{hartshorne} for notation and
terminology) we will let $\os(k)$ denote $i^*(\os(k))$. Then
\cite[Corollary 3.10]{EnEsGa}, shows that
$projdim\ \os(k)\leq n$ for all $k\in \mathbb{Z}$. Now using Serre's theorem
(see for example \cite[Corollary II.5.18]{hartshorne})
and that every quasi-coherent sheaf on $X$ is the
filtered union of coherent subsheafs, we get that
$\oplus_{l\in \Z} \os(k)$ is a generator for $\qco(X)$ of finite projective
dimension $\leq n$ by the previous.

In this case Theorem \ref{maint} specializes as follows:
\begin{corollary}\label{cons}
Let $R$ be a commutative noetherian and $X\subseteq \mathbb{P}^n(R)$ a closed
subscheme. Let $\mathbf{F}$ be a flat quasi--coherent sheaf on $X$. Then
$\mathbf{F}=\varinjlim \mathbf{F}_i$, where
$\mathbf{F}_i$ is locally countably generated and flat with
$\mathrm{projdim}\,{\mathbf F}_i\leq n+1$.
\end{corollary}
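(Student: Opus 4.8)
The plan is to reduce Corollary \ref{cons} to Theorem \ref{maint} by replacing the $\mathcal{V}$-dimension bound with a projective-dimension bound. Since Theorem \ref{maint} already gives $\mathbf{F}=\varinjlim\mathbf{F}_i$ with each $\mathbf{F}_i$ locally countably generated, flat, and satisfying $\mathcal{V}\mathrm{dim}\,\mathbf{F}_i\leq 1$, the only thing left to prove is that in this closed-subscheme setting, being of $\mathcal{V}$-dimension $\leq 1$ forces projective dimension $\leq n+1$. Concretely, each $\mathbf{F}_i$ fits into a short exact sequence $0\to\mathbf{P}_1\to\mathbf{P}_0\to\mathbf{F}_i\to 0$ with $\mathbf{P}_0,\mathbf{P}_1$ vector bundles on $X$, so the first step is simply to invoke the discussion preceding the corollary: it is established there that $\qco(X)$ possesses a family of generators $\{\os(k):\ k\in\mathbb{Z}\}$ of projective dimension $\leq n$, via \cite[Corollary 3.10]{EnEsGa} together with Serre's theorem and the fact that every quasi-coherent sheaf is a filtered union of coherent subsheaves.

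Next I would argue that every vector bundle on $X$ has projective dimension $\leq n$. The natural route is: a vector bundle is, by Corollary \ref{sonuc} (or directly Theorem \ref{ap2} specialized to the projective case), filtered by locally countably generated vector bundles, and more usefully, it is a quotient of a direct sum of copies of the generators $\os(k)$ whose kernel is again an object one can control. The cleanest formulation is to note that the class of objects of projective dimension $\leq n$ in a Grothendieck category with enough projectives is closed under the relevant operations; since each generator $\os(k)$ has $\mathrm{projdim}\leq n$, and a vector bundle $\mathbf{P}$ admits an epimorphism from a coproduct $\bigoplus\os(k)$ whose kernel is again a vector bundle (using that the resolution property and the flatness/projectivity of $\mathbf{P}$ make the sequence locally split), an $\Ext$-dimension-shifting argument gives $\mathrm{projdim}\,\mathbf{P}\leq n$. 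I would phrase this via the long exact sequence for $\Ext^i(-,-)$: if $\Ext^i=0$ for $i\geq n+1$ on the generators, then it vanishes for the same range on any object built as a quotient of generators with vector-bundle kernel.

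The final step combines the two short exact sequences by dimension shifting. Given $0\to\mathbf{P}_1\to\mathbf{P}_0\to\mathbf{F}_i\to 0$ with $\mathrm{projdim}\,\mathbf{P}_0,\mathrm{projdim}\,\mathbf{P}_1\leq n$, the standard long exact sequence
$$\cdots\to\Ext^i(\mathbf{P}_0,-)\to\Ext^i(\mathbf{P}_1,-)\to\Ext^{i+1}(\mathbf{F}_i,-)\to\Ext^{i+1}(\mathbf{P}_0,-)\to\cdots$$
shows that $\Ext^{i+1}(\mathbf{F}_i,-)$ is sandwiched between $\Ext^i(\mathbf{P}_1,-)$ and $\Ext^{i+1}(\mathbf{P}_0,-)$; both vanish once $i\geq n+1$, i.e. $i+1\geq n+2$, and for $i+1=n+1$ the term $\Ext^{n+1}(\mathbf{F}_i,-)$ is controlled by $\Ext^n(\mathbf{P}_1,-)$ which may be nonzero. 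Hence $\Ext^j(\mathbf{F}_i,-)=0$ for all $j\geq n+2$, giving $\mathrm{projdim}\,\mathbf{F}_i\leq n+1$ exactly as claimed.

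The main obstacle I anticipate is justifying that every vector bundle on $X$ genuinely has projective dimension $\leq n$, rather than merely the specific generators $\os(k)$ having that bound. The subtlety is that $\mathbf{P}_0,\mathbf{P}_1$ are arbitrary vector bundles, not a priori direct sums of the $\os(k)$; one must verify that the resolution property plus local projectivity lets one resolve any vector bundle by the generators while keeping the syzygies inside the class of objects of projective dimension $\leq n$. I would expect to lean on the noetherian hypothesis on $R$ (ensuring the relevant $\Ext$ computations and the structure of coherent sheaves behave well) and on the filtered-union-by-coherents property to reduce to the finitely generated case where the bound on the $\os(k)$ can be transported.
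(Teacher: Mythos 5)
Your overall skeleton is the same as the paper's: a two-term vector-bundle presentation of each $\mathbf{F}_i$, a projective-dimension bound of $n$ on both terms, and the long exact sequence in $\Ext$ to conclude $\mathrm{projdim}\,\mathbf{F}_i\leq n+1$. Your final dimension-shifting step is correct and is exactly the computation in the paper. But the obstacle you flag at the end is a genuine gap, and your proposed way around it does not work: to show that an arbitrary vector bundle $\mathbf{P}$ has $\mathrm{projdim}\leq n$, you resolve it as $0\to\mathbf{K}\to\bigoplus\os(k)\to\mathbf{P}\to 0$ with $\mathbf{K}$ again a vector bundle, and then dimension-shift. The long exact sequence sandwiches $\Ext^{j}(\mathbf{P},-)$ between $\Ext^{j-1}(\mathbf{K},-)$ and $\Ext^{j}(\bigoplus\os(k),-)$, so you need $\mathrm{projdim}\,\mathbf{K}\leq n-1$ (or at least $\leq n$) to conclude anything better than $\mathrm{projdim}\,\mathbf{P}\leq n+1$ --- but $\mathbf{K}$ is exactly as unknown as $\mathbf{P}$ was, so the argument is circular, and iterating it only reproduces the bound $n+1$ at every stage without ever terminating. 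The appeal to the sequence being ``locally split'' does not help either: local splitness does not yield splitness in $\qco(X)$, which is precisely why vector bundles need not be projective objects here.

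The paper closes this gap differently, in two small but decisive ways. First, it does not treat Theorem \ref{maint} as a black box with arbitrary vector bundles $\mathbf{P}_0,\mathbf{P}_1$: it re-runs the proof of Theorem \ref{maint} with the middle term replaced by $\bigoplus_{j\in J'}\os(k_j)^{m'_j}$, a direct sum of the specific generators, so the bound $\mathrm{projdim}\leq n$ on the middle term is immediate from \cite[Corollary 3.10]{EnEsGa}. Second --- and this is the key point you missed --- that same corollary is invoked in its full strength: it bounds the projective dimension of \emph{every} locally projective quasi-coherent sheaf on $X$ by $n$, not merely that of the twists $\os(k)$; the paper applies it directly to the kernel $\mathbf{M}_i$, which is a (locally countably generated) vector bundle. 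So the lemma you were trying to bootstrap internally is simply an external citation, and once you use it for both terms of your sequence $0\to\mathbf{P}_1\to\mathbf{P}_0\to\mathbf{F}_i\to 0$, your argument goes through verbatim; no noetherian reduction to coherent subsheaves is needed at that point beyond what the citation itself assumes.
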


\begin{proof}
By the previous comments we can replace $\bigoplus_{j\in J}
\mathbf{P}_j $ by $\bigoplus_{j\in J}
\os (k_j)^{m_j}$ in the proof of Theorem \ref{maint}. Then from the proof
we get a short exact sequence $$0\to \mathbf{M}_i\to \bigoplus_{j\in J'}
\os (k_j)^{m'_j}\to \mathbf{F}_{i,J'}\to 0.$$ Now, for any quasi-coherent sheaf
$\mathbf{N}$, we have an exact sequence
$$\cdots \rightarrow \Ext^{l+1}(\mathbf{M_i},\mathbf{N})\rightarrow
\Ext^{l+2}(\mathbf{F}_{i,J'},\mathbf{N})\rightarrow
\Ext^{l+2}(\bigoplus_{j\in J'}\os(n_j)^{m'_j},\mathbf{N})\rightarrow
\cdots,$$ for each $l \geq 0$. But, since $\bigoplus_{j\in
J'}\os(n_j)^{m'_j}$ and $\mathbf{M}_i$ are locally projective, their
projective dimensions are $\leq n$ (cf. \cite[Corollary 3.10]{EnEsGa}). So we
get
$\Ext^{s+2}(\mathbf{F}_{i,J'},\mathbf{N})=0$, for each $s\geq n$.
That is, $\mathrm{projdim}(\mathbf{F}_{i,J'})\leq n+1$.
\end{proof}

\bigskip\par\noindent
{\bf Remarks:}
\begin{enumerate} 
\item According to \cite[Proposition 2.3]{hove} a more general
version of Corollary \ref{cons} holds on $\qco(X)$, for $X$ a noetherian
scheme with enough locally frees which is also separated or finite-dimensional.
\item Theorem \ref{maint} and Corollary \ref{cons} are also valid in case
$X=\mathbb{P}^1_R$ for $R$ commutative (need not be noetherian). This is
because the family $\{\mathcal{O}(k):\ k\in \mathbb{Z}\}$ is also a family of
generators of finite projective dimension $\leq 2$ in this case (see
\cite[Proposition 3.4]{flat}).
\end{enumerate}


\begin{thebibliography}{00}
\bibliographystyle{apalike}
\addcontentsline{toc}{chapter}{REFERENCES}
\thispagestyle{myheadings}


\bibitem[Anderson \& Fuller, 1992]{fuller}Anderson, F. W., \& Fuller, K. R.
(1992).\emph{Rings and categories of modules}. Grad. Texts in Math. New York:
Springer-Verlag.



\bibitem[Drinfeld, 2006]{drinfeld}
Drinfeld, V. (2006).  Infinite-dimensional vector bundles in
algebraic geometry. P. Etingof, R. Vilademir, \& I.M. Singer, (Ed),
 \emph{The unity of mathematics}(263-304).
Boston: Birk\"{a}user.
\bibitem[Eklof, 1977]{eklof} Eklof, P.C. (1977). Homological algebra and set theory. \emph{Transactions of the
American Mathematical Society, 227}, 207-225.
\bibitem[Enochs \& Estrada, 2005]{relativehom} Enochs, E., \& Estrada, S.
(2005). Relative homological algebra in the category of quasi-coherent sheaves.
\emph{Advances in Mathematics, 194}, 284-295.
\bibitem[Enochs, Estrada Garc\'{\i}a Rozas 2008]{EnEsGa}
Enochs, E., Estrada, S., \& Garc\'{\i}a Rozas, J.R. (2008).
Gorenstein categories and Tate cohomology on projective schemes.
\emph{Mathematische Nachrichten, 281}, 525-540.
\bibitem[Enochs, Estrada Garc\'{\i}a Rozas \& Oyonarte 2004a]{flat} Enochs,
E., Estrada, S., Garc\'{\i}a Rozas, J.R., \& Oyonarte, L. (2004a). Flat covers
in the category of quasi-coherent sheaves over the projective Line.
\emph{Communications in Algebra, 32}, 1497-1508.
\bibitem[Enochs, Estrada, Garc\'{\i}a Rozas \& Oyonarte, 2004b]{cotorsion} Enochs, E., Estrada, S., Garc\'{\i}a Rozas, J.R., \& Oyonarte, L. (2004b). Flat and cotorsion quasi-coherent sheaves, \emph{Applications. Algebras and Representation Theory, 7}, 441-456.
\bibitem[Enochs, Estrada, Garc\'{\i}a Rozas \& Oyonarte, 2003]{generalized} Enochs, E., Estrada, S., Garc\'{\i}a Rozas, J.R., \& Oyonarte,
L. (2003). Generalized quasi-coherent sheaves. \emph{Journal of
Algebra and Its Applications, 2}(1), 1-21.



\bibitem[Estrada, Asensio Prest \& Trlifaj, 2009]{modelcategory} Estrada, S.,
Asensio, P.G., Prest, M., \& Trlifaj, J. (2009).\emph{ Model category structures
arising from Drinfeld
vector bundles.} Preprint. Retrieved June 10, 2010, from
http://arxiv.org/archive/math,     arXiv:0906.5213v1


\bibitem[Hovey, 2001]{hove}
Hovey, M. (2001). Model category structures on chain complexes of
sheaves \emph{Transactions of the American Mathematical Society,
353}, 2441--2457.

\bibitem[G\"{o}bel \& Trlifaj, 2006]{triflaj} G\"{o}bel, R., \& Trlifaj, J.
(2006).\emph{Approximations and endomorphism algebras of modules}. Berlin:
Walter de Gruyter.

\bibitem[Hartshorne, 1977]{hartshorne} Hartshorne, R. (1977). \emph{Algebraic
geometry}. New York: Springer-Verlag.

\bibitem[Lazard, 1969]{lazard} Lazard, D. (1969). Autour de la platitude, \emph{
Bulletin de la Soci\'et\'e Math\'ematique de France, 97}, 81--128.

\bibitem[Raynaud \& Gruson, 1971]{RG} Raynaud, M., \& Gruson, L. (1971). 
Crit\`{e}res de platitude et de projectivi\`{e}. \emph{Inventiones Math. 13},
1-89.


\bibitem[Serre, 1955]{serre} Serre, J.P. (1955). Faisceaux algebriques coherents. \emph{Annals of Mathematics, 61}, 197-278.
\bibitem[Serre, 1958]{serre2} Serre, J.P. (1958). Modules projectifs et espaces fibr\'es \`a
fibre vectorielle. \emph{S\'eminaire P. Dubreil, M.-L.
Dubreil-Jacotin et C. Pisot 11}, 1957-1958, 
 Paris: Expos\`e 23.
\end{thebibliography}
\end{document}